\newcounter{Hequation}
\g@addto@macro\equation{\stepcounter{Hequation}}
\theoremstyle{definition}
\newtheorem{prop}{Proposition}
\newtheorem{lem}{Lemma}
\newtheorem*{rmk}{Remark}
\renewcommand{\l}{\left}
\renewcommand{\r}{\right}
\newcommand{\stack}[2]{\begin{subarray}{c} #1 \\ #2 \end{subarray}}
\newcommand{\answered}[1]{ }
\title{\large A Subgradient Approach for Constrained Binary Optimization via Quantum Adiabatic Evolution}
\date{\today}
\keywords{Adiabatic quantum computation, Constrained integer programming, Lagrangian duality, Gradient descent, Subgradient method}
\author{Sahar Karimi}
\author{Pooya Ronagh}
\address[Sahar Karimi, Pooya Ronagh]{1QB Information Technologies (1QBit), 458-550 Burrard Street, Vancouver, BC, V6C 2B5, Canada}
\email[Sahar Karimi]{sahar.karimi@1qbit.com}
\email[Pooya Ronagh]{pooya.ronagh@1qbit.com}
\begin{document}
\maketitle
\onehalfspacing

\begin{abstract}
An earlier work \cite{Ronagh15} proposes a method for solving the Lagrangian dual of a constrained binary quadratic programming problem via quantum adiabatic evolution using an outer approximation method. This should be an efficient prescription for solving the Lagrangian dual problem in the presence of an ideally noise-free quantum adiabatic system. However, current implementations of quantum annealing systems demand methods that are efficient at handling possible sources of noise. In this paper, we consider a subgradient method for finding an optimal primal-dual pair for the Lagrangian dual of a constrained binary polynomial programming problem. We then study the quadratic stable set (QSS) problem as a case study. We see that this method applied to the QSS problem can be viewed as an instance-dependent penalty-term approach that avoids large penalty coefficients. Finally, we report our experimental results of using the D-Wave 2X quantum annealer and conclude that our approach helps this quantum processor to succeed more often in solving these problems compared to the usual penalty-term approaches. 
\end{abstract}

\section{Introduction} 

Quantum annealing hardware has been employed to solve \emph{unconstrained binary quadratic programming} (UBQP) problems \cite{McGeoch2013}. Motivated by real-world applications, several studies have focused on extending the capabilities of this hardware to solve more-general optimization problems \cite{Venturelli2015, Gili, Dominic2015, Zick}. In practice, formulations of real-world problems contain large lists of constraints, and a technique used frequently in the quantum annealing literature is the penalizing of these constraints in a single, unconstrained, objective function of several binary variables. 

In the penalty methods, the penalty coefficients are assigned to be larger than a threshold, which we will call the \emph{theoretical penalty bound}. The theoretical penalty bounds are computed such that the resulting unconstrained optimization problem is equivalent to the constrained formulation for all instances of the problem. It is not generally trivial to find a tight theoretical penalty bound for a given optimization problem. Moreover, a penalty bound that is sufficiently large for a given instance of a problem is often much smaller than the theoretical penalty bound.     

One may view quantum annealing as a physical implementation of a heuristic evolution. As such, the appearance of terms that are different in orders of magnitude may create objective functions that are difficult for the quantum annealer to explore. The several sources of noise (elaborated upon in Section \ref{sec:prelim-aqc}) of quantum annealing hardware is an added reason of inconvenience of such objective functions. Therefore, the theoretical penalty bounds are generally not suitable for the application of quantum annealers. Finally, it is important to note that penalty methods cannot efficiently handle inequality constraints. 

In \cite{Ronagh15}, Ronagh et al. explored application of outer approximation method for solving constrained problems using quantum adiabatic computation. In the present paper, we explore an alternative approach to solving a {\em constrained binary polynomial programming} (CBPP) problem stated formally as 
\begin{equation} \label{BPP_def} \tag{CBPP}
\begin{array}{llcl}
	\max &  f(x),& \\
	\text{s.t.} & g_i(x) = \mathbf 0&\text{for}& i=1, \ldots, m,\\
	 & h_j(x) \leq \mathbf 0 &\text{for} & j=1, \ldots, p,\\
	 & x \in \mathbb B^n,& 
\end{array}
\end{equation}
where $f$, $g_i$ for $i=1,\ldots, m$,  and $h_j$ for $j=1,\ldots, p$ are a finite number of polynomials on $\mathbb B^n= \{0, 1\}^{n}$. 

Note that, since for any three binary variables $x, y, z \in \mathbb B$, the locus 
\begin{equation} \label{eq:degree-reduction}
\arg \max 2z (x+ y) - 3z - xy
\end{equation}
is identical to the vanishing locus of $xy -z = 0$ on $\mathbb B^3$, we may without loss of generality assume that all CBPP problems have at least an equivalent representation 
\begin{equation} \label{CBQP_def} \tag{CBQP}
\begin{array}{llcl}
	\max &  f(x),& \\
	\text{s.t.} & g_i(x) = \mathbf 0&\text{for}& i=1, \ldots, m,\\
	 & h_j(x) \leq \mathbf 0 &\text{for} & j=1, \ldots, p,\\
	 & x \in \mathbb B^n,& 
\end{array}
\end{equation}
where $f$, and all $g_i$ and $h_j$ are of degree at most two. Note that any quadratic function of the form $x^t A x + b^tx + c$ on $x\in \mathbb B^n$ can be written as $x^t  \l(A+ \text{diag}(b) \r) x +  c$ by the fact that  $x_i^2 = x_i$, where $\text{diag($b$)}$ is a diagonal matrix with entries of vector $b$ on the diagonal.

A subclass of \emph{constrained binary quadratic programming} (CBQP) problems is the {\em quadratic stable set} (QSS) problem. Let $G=(V,E)$ be a graph with vertex set $V$ and edge set $E$. $\mathcal S \subseteq V$ is a stable set of $G$ if the subset of edges with both endpoints in $\mathcal S$ is empty. Let $W$ define  a matrix of weights between each pair of vertices, and $A$ stand for the adjacency matrix of graph $G$. The following is a formal presentation of the QSS problem: 
\begin{equation} \label{QSS_def} \tag{QSS}
\begin{array}{llcl}
	\max &  x^tWx,& \\
	\text{s.t.} & x^tAx=0,\\
	 & x \in \mathbb B^n.& 
\end{array}
\end{equation}
Unlike its well-known linear counterpart, that is, the stable set problem (also known as the maximum independent set problem), the QSS problem is more contemporary and has been addressed to a lesser extent in the literature (see  \cite{QSS_FuriniTraversi}, \cite{BiqCrunch_s}, and references therein). 

In this paper, we present a method for solving the Lagrangian dual of a CBPP problem using a subgradient descent approach and quantum annealing. Once applied to \eqref{QSS_def}, this method terminates in strong duality, hence solving \eqref{QSS_def} to optimality without the need for a branch-and-bound scheme. In fact, the results presented in this paper apply to a more general form of the QSS problem, in which $A$ is not necessarily an adjacency matrix and entries of $A$ can take any non-negative values, that is, a problem similar to formulation \eqref{QSS_def} in which $A\ge \mathbf 0$. We refer to this variant of the QSS problem as the {\em generalized quadratic stable set} (GQSS) problem.

The paper is organized as follows. In Section 2, we review the quantum adiabatic approach to solving \emph{unconstrained binary quadratic programming} (UBQP) problems, and the subgradient descent method for solving the Lagrangian dual problem. In Sections \ref{sec: QSSP}, we focus on the QSS problem; we present the theoretical bounds on the penalty coefficients as well as the iterative methods for solving the QSS problem via solving its Lagrangian dual. Our experiments are described and numerical results are reported in Section \ref{sec: res}.  Finally, in Section \ref{sec:conclusion}, we state our concluding remarks.

\section{Preliminaries}\label{sec: litRvw}

\subsection{The quantum adiabatic approach to solving UBQP problems}\label{sec:prelim-aqc}

We refer the reader to \cite{Ronagh15} for a short introduction on quantum adiabatic computation. For a more extensive study, we refer the reader to \cite{farhi00} and \cite{farhi01} for the proposal of a quantum adiabatic algorithm by Farhi et al., and to \cite{Vazirani} for an exposition on its computational aspects.

These references suggest that practical quantum hardware can yield a significant quantum speedup in certain optimization problems. In particular, quantum annealers manufactured by D-Wave Systems Inc. solve a spin glass model problem where couplings connect pairs of quantum bits \cite{DwaveNature}. These annealers solve Ising models: 
\begin{equation} \label{Ising_def} \tag{Ising}
	\min_{s \in \mathbb \{-1,1\}^n}\, \sum_{(i,j)\in E} J_{ij} s_is_j + \sum_{i \in V} h_i s_i\,,
\end{equation}
where nonzero coefficients of the quadratic terms, that is, $E$, create a subgraph of a sparse graph structure known as the {\em Chimera} graph \cite{chimera}.

Note that any UBQP problem of the form
\begin{equation} \label{UBQP_def} \tag{UBQP}
	\min_{x \in \mathbb B^n}\, x^t\, Q\, x
\end{equation}
can be represented as an equivalent Ising model by using the affine transformation of $s_i = 2x_i -1$. Moreover, by using graph-minor embedding \cite{AidanRoy2014} and degree reduction techniques \cite{Ishikawa2011}, one might assume that these machines can solve any unconstrained binary polynomial programming problem. 
\begin{equation} \label{UBPP} \tag{UBPP}
	\max_{x \in \mathbb B^n}\, f(x)\,,
\end{equation}
where $f(x)$ is any polynomial in real coefficients. 

Our goal is to broaden the scope of application of quantum annealers by designing algorithms for solving constrained binary programming problems that work in conjunction with such oracles. 

In \cite{Ronagh15}, the authors proposed a method for solving CBQP problems using a branch-and-bound framework in which the bounding strategy is to solve the Lagrangian dual of the primal problem by successive application of quantum adiabatic evolution. The method described in \cite{Ronagh15} shows a fast rate of convergence to solution of the dual problem in every node of the branch-and-bound tree and provides a tight bound that drastically reduces the number of nodes traversed by the algorithm. However, it is important to mention that quantum annealers are coupled to an environment, and this significantly affects their performance. 

Albash et al. propose a noise model for D-Wave devices \cite{albash2014}. This model includes the control noise of the local field and couplings of the chip, as well as the effect of the cross-talk between qubits that are not coupled. In \cite{albash2014}, it is concluded that, despite the thermal excitations and small value of the ratio of the single-qubit decoherence time to the annealing time, an open-system quantum-dynamical description of the D-Wave device that starts from a quantized energy-level structure is well justified. The design of benchmark instances that can detect quantum speedup or any quantum advantage of a quantum annealer in comparison to state-of-the-art classical algorithms is studied by Katzgraber et al. \cite{katzgraber2015}. Zhu et al. \cite{zoshk2016} show that increasing the classical energy gap beyond the intrinsic noise level of the machine can improve the success of the D-Wave Two quantum annealer, at the cost of producing considerably easier benchmark instances. We refer the reader to \cite{king} for an explanation of the practicality of, and best practices in, using D-Wave devices.

\subsection{The Lagrangian dual problem} 

The Lagrangian dual (LD) of \eqref{BPP_def} is 
\begin{equation}\tag{LD}\label{def:LD}
 \min \limits_{\stack{\lambda \in \mathbb{R}^m}{\mu \in \mathbb R^p_-}}  \  d(\lambda, \mu),  
\end{equation}
where $d(\lambda, \mu)$ is evaluated via the \textit{Lagrangian relaxation}
\begin{equation}\tag{$\text{L}_{\lambda,\mu}$}\label{def: LR}
 d(\lambda, \mu) = \max \limits_{x\in \mathbb B^n} L(x,\lambda, \mu) =  f(x) + \lambda^t g (x) + \mu^t h(x) .
\end{equation}
Here, $g(x) = (g_1 (x) , \ldots, g_m(x))^t$ and $h(x) = (h_1(x) , \ldots, h_p (x))^t$ are the multivariable functions $g: \mathbb R^n \to \mathbb R^m$ and $h: \mathbb R^n \to \mathbb R^p$, respectively, with polynomial entries. The function $d(\lambda, \mu)$ is the maximum of a finite set of linear functions of $\lambda$ and $\mu$ and hence is convex and piecewise linear.

\begin{lem}[Weak Duality] The optimal value of \eqref{def:LD} is an upper bound for the optimal value of \eqref{BPP_def}. 
\end{lem}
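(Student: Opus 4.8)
The plan is to exhibit the chain of inequalities that is standard for Lagrangian duality, paying careful attention to the sign convention forced by the inequality constraints. First I would fix an arbitrary point $x$ that is feasible for \eqref{BPP_def}, so that $g(x) = \mathbf 0$ and $h(x) \le \mathbf 0$ with $x \in \mathbb B^n$, together with an arbitrary dual pair $(\lambda, \mu)$ admissible for \eqref{def:LD}, meaning $\lambda \in \mathbb R^m$ and $\mu \in \mathbb R^p_-$. Evaluating the Lagrangian from \eqref{def: LR} at this $x$ gives $L(x, \lambda, \mu) = f(x) + \lambda^t g(x) + \mu^t h(x)$.

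Next I would simplify this expression using feasibility. Since $g(x) = \mathbf 0$, the term $\lambda^t g(x)$ vanishes for every $\lambda$, which is exactly why the equality multiplier is left unconstrained in sign, that is, $\lambda \in \mathbb R^m$. The term $\mu^t h(x)$ is where the sign bookkeeping matters: each component satisfies $\mu_j \le 0$ and $h_j(x) \le 0$, so $\mu_j h_j(x) \ge 0$, whence $\mu^t h(x) \ge 0$. Combining these observations yields $L(x, \lambda, \mu) = f(x) + \mu^t h(x) \ge f(x)$.

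I would then pass from this pointwise bound to the dual value. Because $d(\lambda, \mu)$ is a maximum of $L(\cdot, \lambda, \mu)$ over all of $\mathbb B^n$, and the feasible set of \eqref{BPP_def} is contained in $\mathbb B^n$, we obtain $d(\lambda, \mu) \ge L(x, \lambda, \mu) \ge f(x)$ for every feasible $x$. Taking the maximum over feasible $x$ shows that $d(\lambda, \mu)$ bounds the optimal value of \eqref{BPP_def} from above for each admissible $(\lambda, \mu)$; minimizing the left-hand side over all such $(\lambda, \mu)$ then shows that the optimal value of \eqref{def:LD} is an upper bound for that of \eqref{BPP_def}, as claimed.

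The argument presents no genuine obstacle: the only point requiring care is the sign convention, namely that the inequality multipliers are drawn from $\mathbb R^p_-$ precisely so that the penalty contribution $\mu^t h(x)$ is nonnegative on feasible points and therefore cannot erode the bound. I would also note that the conclusion needs no convexity of $f$, $g$, or $h$, no constraint qualification, and no attainment of the dual minimum; it holds verbatim for the degree-reduced form \eqref{CBQP_def} and, as a special case, for \eqref{QSS_def}, since each is an instance of \eqref{BPP_def}.
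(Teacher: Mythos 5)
Your proof is correct and is essentially the paper's own argument: the paper proves the same chain of inequalities, observing that on the feasible set $\lambda^t g(x)$ vanishes and $\mu^t h(x) \geq 0$ (since $\mu \in \mathbb R^p_-$ and $h(x) \leq \mathbf 0$), and then enlarging the maximization to all of $\mathbb B^n$. Your pointwise presentation, fixing a feasible $x$ and bounding $f(x) \leq L(x,\lambda,\mu) \leq d(\lambda,\mu)$ before optimizing over both sides, is merely a stylistic rearrangement of the same steps, and your remarks on the sign convention match the paper's setup exactly.
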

\begin{proof}
Straightforward calculations show that given any fixed choice of $\lambda \in \mathbb{R}^m$ and $\mu \in \mathbb R^p_-$,
\begin{align*} 
v&:= \max_{x \in \mathbb B^n} \{ f(x) : g(x) = \mathbf 0, h(x) \leq \mathbf 0\} \\
&\leq \max_{x \in \mathbb B^n} \{ f(x) : \lambda^t  . g(x) = \mathbf 0, \mu^t . h(x) \geq \mathbf 0\} \\
&\leq \max_{x \in \mathbb B^n} \{ f(x) + \lambda^t . g(x) + \mu^t . h(x) \} \,.
\end{align*}
\end{proof}

\subsection{Subgradient method} 

Given any fixed tuple of Lagrange multipliers $(\lambda, \mu)$, the value of $d(\lambda, \mu)$ is the solution to a UBPP problem. This is the key fact in the method explained here.

To solve \eqref{def:LD}, an iterative approach may be employed. At the $k$-th iteration of the algorithm, $d(\lambda, \mu)$ is evaluated using a quantum annealing process, and a primal solution $x_k^\ast$ is attained. Note that $d(\lambda, \mu)$ is a convex function and the subgradient is a descent direction for $L$ because of which we can use a subgradient descent to the local (and hence global) minimum of $L$. The update rule for the multipliers in the $k$-th iteration of the algorithm will follow \cite{NonLinIntPrg_LiSun}:
\begin{equation}\label{GD-update}
\begin{array}{rl}
\lambda^{k+1} &= \lambda^k + s^k \frac{\nabla_\lambda L(x_k^\ast)}{\|\nabla_\lambda L(x_k^\ast)\|}\\
\mu^{k+1} &= P^- \left(\mu^k + s^k \frac{\nabla_\mu L(x_k^\ast)}{\|\nabla_\mu L(x_k^\ast)\|}\right),
\end{array} 
\end{equation}
where the projection $P^-$ keeps a $p$-dimensional vector $\mu$ in the negative orthant: 
$$P^- (\mu)= (\min (0, \mu_1), \ldots, \min (0, \mu_p)).$$
The algorithm is essentially \cite[Procedure 3.1]{NonLinIntPrg_LiSun}, where Step 1 is performed by quantum annealing. 

\begin{algorithm}
\caption{\, Quantum Gradient Descent}
\label{main}
\begin{tabbing} 
\quad \quad \= \quad \quad  \=\quad \=\quad \=\kill 
\> initialize: Lagrange multipliers $(\lambda^0, \mu^0)$ and $k= 0$ \\
\> {\bf until} $\text{termination}$ {\bf do} \\
\>\> solve \eqref{def: LR} using a quantum annealing device\\
\>\> find $(\lambda^{k+1}, \mu^{k+1})$ using update rule \eqref{GD-update}\\
\>\> $k \leftarrow k+1$
\end{tabbing}
\end{algorithm}

\noindent
$s^k$ in \eqref{GD-update} is the step size, and the choice of step-size can greatly affect the performance of the above algorithm. 

\section{The Quadratic Stable Set Problem}\label{sec: QSSP}
In this section, we investigate solving techniques for the {\em generalized quadratic stable set} problem defined as  
\begin{equation} \label{GQSS} \tag{GQSS}
\begin{array}{llcl}
	\max &  x^tWx,& \\
	\text{s.t.} & x^tAx=0,\\
	 & x \in \mathbb B^n,& 
\end{array}
\end{equation}
where $W\in \mathbb R^{n \times n}$ and $A\ge \mathbf 0$. Note that if we have several constraints of the form $x^t A^{(i)} x =0$ for $i\in I$ in \eqref{GQSS}, we can combine them into a single constraint $x^t \l( \sum_{i\in I} A^{(i)} \r) x =\mathbf 0$, hence reducing it to the form mentioned above. Moreover, without loss of generality, we may assume that all of the matrices in the quadratic terms in \eqref{GQSS} are symmetric, since $x^tQx = \frac{1}{2} x^t \l( Q+Q^t\r) x$ for any $Q \in \mathbb R^{n \times n}$. Finally, notice that for any $i,j$ where $A_{ij}\neq 0$, we have $x_i x_j =0$; therefore, we may pre-process \eqref{GQSS} such that $W\bullet A=\mathbf 0$ by setting $W_{ij}$ corresponding to nonzero $A_{ij}$ equal to zero, where $\bullet$ denotes the Hadamard (or entry-wise) product. The discussion above is summarized below in a list of assumptions on \eqref{GQSS}.

\newpage
{\bf Assumptions on \eqref{GQSS}:}
\begin{enumerate}
\item Problem \eqref{GQSS} has only a single constraint $x^tAx =0$, where $A\ge \mathbf 0$.
\item Matrices $A$ and $W$ are symmetric.
\item $W\bullet A =\mathbf 0$.
\end{enumerate}

A common technique for solving \eqref{GQSS} is penalizing the constraint in the objective function and solving the resulting unconstrained problem instead. It is easy to argue that for a sufficiently large penalty coefficient, $\lambda$, the UBQP problem 
\begin{equation}\label{UGQSS} \tag{GQSS$_\lambda$}%\tag{UGQSS}
L(\lambda):= \max_{x \in \mathbb B^n} \ x^t W x - \lambda (x^t A x)\\
\end{equation} solves problem \eqref{GQSS}. 
However, when aiming to use a quantum annealer to solve this problem, it is important to choose the smallest possible $\lambda$ to increase the chance of attaining the optimal solution. The numerical experiment we present in the next section supports the idea that the smallest value of $\lambda$ for an arbitrary instance is generally much smaller than the theoretically derived penalty bound. This suggests substituting the penalty methods by an iterative scheme such as the subgradient method.

\subsection{Penalty methods for GQSS problems}\label{sec:GQSS-penalty-method}
Let $\mathcal N(i) := \{j \neq i: a_{ij} \neq 0\}$ and $W^+$ be the matrix containing non-negative entries of $W$. Note that by Assumption 3, $W_{ik}=0$ (hence $W^+_{ik}=0$) for all $k\in \mathcal N(i)$.
%%%%%%%
\begin{prop}\label{lmbBnd_prop}
Let $\lambda_i = \frac{ \frac{W^+_{ii}}{2} + \sum_{j \not \in \mathcal N(i)\neq i} W^+_{ij} } { \min_{j: A_{ij} \neq 0} A_{ij} }$. For any $\lambda > \tilde \lambda := \max_i \lambda_i$, formulation \eqref{UGQSS} solves \eqref{GQSS}.
\end{prop}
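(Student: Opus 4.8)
The plan is to argue by contradiction via a single-coordinate local improvement: I will show that whenever $\lambda > \tilde\lambda$, every maximizer $x^\ast$ of the penalized objective in \eqref{UGQSS} must be feasible for \eqref{GQSS}, and then deduce optimality for free. The first observation is that, because $A \ge \mathbf 0$, the form $x^t A x$ is nonnegative on $\mathbb B^n$ and vanishes exactly on the feasible set; hence a maximizer of \eqref{UGQSS} is feasible if and only if $x^{\ast t} A x^\ast = 0$. Granting feasibility, for any feasible $x$ the penalty term is zero, so $L(\lambda) = \max\{x^t W x : x^t A x = 0\}$ is attained at $x^\ast$, which therefore solves \eqref{GQSS}. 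Thus the whole statement reduces to proving feasibility of $x^\ast$.

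To prove feasibility, suppose toward a contradiction that $x^{\ast t} A x^\ast > 0$. Since every entry of $A$ is nonnegative, positivity of the form forces an index $i$ with $x^\ast_i = 1$ that participates in a nonzero term, that is, either $A_{ii} > 0$ or there is a neighbor $k \in \mathcal N(i)$ with $x^\ast_k = 1$. I will consider the point $x'$ obtained from $x^\ast$ by flipping this single coordinate $x_i$ from $1$ to $0$, which keeps $x' \in \mathbb B^n$, and compare $L$-values. Using symmetry of $A$ and $W$ together with $x_i^2 = x_i$, the change in the penalized objective is
\[
L(x') - L(x^\ast) = -\Bigl(W_{ii} + 2\sum_{k \neq i} W_{ik} x^\ast_k\Bigr) + \lambda\Bigl(A_{ii} + 2\sum_{k \neq i} A_{ik} x^\ast_k\Bigr).
\]

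The heart of the argument is to bound the two parenthesized marginals. For the $W$-marginal I will use $W_{ii} \le W^+_{ii}$ and $W_{ik} x^\ast_k \le W^+_{ik}$, and then invoke Assumption 3 (which gives $W^+_{ik} = 0$ for every $k \in \mathcal N(i)$) to drop all neighbor contributions, leaving the upper bound $W^+_{ii} + 2\sum_{k \notin \mathcal N(i),\, k \neq i} W^+_{ik}$, which is exactly twice the numerator of $\lambda_i$. For the $A$-marginal I will use that the active participation of $i$ contributes at least one positive summand, so $A_{ii} + 2\sum_{k \neq i} A_{ik} x^\ast_k \ge 2\min_{j: A_{ij} \neq 0} A_{ij}$, twice the denominator of $\lambda_i$. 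Combining these bounds gives $L(x') - L(x^\ast) > 0$ as soon as $\lambda > \lambda_i$, and a fortiori when $\lambda > \tilde\lambda = \max_i \lambda_i$; this contradicts optimality of $x^\ast$, so $x^\ast$ is feasible and the proof closes.

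The step I expect to be most delicate is the lower bound on the $A$-marginal: the factor of $2$ that makes the symmetric off-diagonal contribution match $2\min_j A_{ij}$ (thereby cancelling the factor of $2$ attached to the $W^+$ sum in the numerator) relies on the violation being witnessed by an off-diagonal edge. I would handle the purely diagonal case ($A_{ii} > 0$ with no active neighbor) separately, observing that it either does not arise under the intended adjacency-type hypotheses or produces an even larger penalty reduction, so the threshold $\tilde\lambda$ remains sufficient. Careful bookkeeping of these factors of $2$, together with the clean use of Assumption 3 to eliminate neighbor terms from the $W$-marginal, is precisely what makes the stated $\lambda_i$ the right bound.
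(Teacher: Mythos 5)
Your proposal is correct and takes essentially the same route as the paper's proof: both argue by contradiction, flip a single active coordinate from $1$ to $0$, bound the $W$-marginal above by $W^+_{pp} + 2\sum_{j \notin \mathcal N(p),\, j \neq p} W^+_{pj}$ using Assumption 3 to kill the neighbor terms, and bound the $A$-marginal below by $2 \min_{j: A_{ij} \neq 0} A_{ij}$ via the witnessing off-diagonal entry, which is exactly where the factors of $2$ cancel against the numerator of $\lambda_i$. The diagonal subtlety you flag at the end is silently present in the paper as well --- its proof also tacitly takes the violation to be witnessed by an edge with $q \in \mathcal N(p)$ --- so your version is, if anything, slightly more careful than the original on that point.
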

%%%%%%%
\begin{proof}
Our proof is by contradiction. Suppose $x^*$ is the optimal solution of \eqref{UGQSS}, but it is not feasible for \eqref{GQSS}; thus, $(x^*)^t A x^* \neq 0$, meaning that $\exists \  p, q$ such that $A_{pq}x^*_px^*_q >0$, that is, $A_{pq}\neq 0$ and $x^*_p, x^*_q = 1$. We argue that by setting $x^*_p=0$, we can improve the objective value; hence we reach the contradiction that $x^*$ was optimal for \eqref{UGQSS}.  Let $L(\lambda, x) := x^t W x - \lambda \left (x^t A x \right )$, and $\tilde x^\ast$ be the vector attained by setting $x_p^\ast=0$. It is easy to confirm that 
$$
\begin{array}{lll}
L(\lambda, \tilde x^\ast) & = & L(\lambda, x^\ast)  + 2 \left( -\frac{W_{pp}}{2} -  \sum_{j \not\in \mathcal N(p) \neq p }W_{pj} x_j \right)\\
& & + 2\lambda \l(A_{pq}+ \sum_{ j \in \mathcal N(p) \neq q } A_{pj} x_j \r)  \\
& \ge & L(\lambda, x^\ast)  + 2 \left( -\frac{W^+_{pp}}{2} -  \sum_{j \not\in \mathcal N(p) \neq p }W^+_{pj} x_j \right)+ 2\lambda A_{pq}\\
& \ge & L(\lambda, x^\ast)  + 2 \left( -\frac{W^+_{pp}}{2} -  \sum_{j \not\in \mathcal N(p) \neq p }W^+_{pj} x_j \right)+ 2\lambda \l({\min_{j: A_{ij} \neq 0} A_{ij}}\r)\\
&> &L(\lambda, x^\ast), 
\end{array}
$$ 
where the first inequality is satisfied by the fact that $ -W_{ij} \ge 0 $ for $ij$ not appearing in $W^+$ and $2\lambda \sum_{ j \in \mathcal N(p) \neq q } A_{pj} x_j \ge 0 $, and the second inequality is a result of our choice of $\lambda$, that is, 
$$
2\lambda \l({\min_{j: A_{ij} \neq 0} A_{ij}}\r) > W^+_{pp} +2 \sum_{j\not \in \mathcal N(p) \neq p} W^+_{pj}.
$$
\end{proof}

\begin{rmk} One may easily confirm that, in the absence of Assumption 3, $\lambda_i$ in Proposition \ref{lmbBnd_prop} can be modified as
$$\lambda_i = \frac{ \frac{W^+_{ii}}{2} + \sum_{j \not \in \mathcal N(i)\neq i} W^+_{ij}+ \max_{j \in \mathcal N(i)} W^+_{ij} } { \min_{j: A_{ij} \neq 0} A_{ij} }$$
so that the conclusion stays valid.
\end{rmk} 

\begin{rmk} 
The bound derived in Proposition \ref{lmbBnd_prop} is tight. Consider the following graph: 
\begin{center}
\begin{tikzpicture}[scale=0.5, shorten >=1pt, auto, node distance=3cm, thick,
   node_style/.style={circle,draw=black,font=\sffamily\bfseries},
   edge_style/.style={draw=black, ultra thick}]
\node[node_style] (1) at ( 90:4) {\footnotesize1};
\node[node_style] (2) at ( 162:4) {\footnotesize2};
\node [node_style](3) at (234:4) {\footnotesize3};
\node [node_style](4) at (306:4) {\footnotesize4};
\node [node_style] (5) at (18:4) {\footnotesize5};

\draw [above=2pt ](5) edge node{\tiny} (1)
[right](5) edge node{\tiny} (4);
\draw [very near start,above=0ex ](5) edge node{\tiny} (2)
[below](5) edge node{\tiny} (3);
\draw  [dotted, above] (2) edge node{$\tiny\omega$} (1);
\draw[dotted, below] (3) edge node{$\tiny\omega$} (4);
\draw [left=2pt](2) edge node{\tiny} (3);
\draw[dotted,left,very near start] (1) edge node{\tiny} (3) ;
\draw [dotted,very near start, right](1) edge node{\tiny}(4);
\draw[dotted, below,very near start] (2) edge node{\tiny} (4);
\path[dotted,every loop/.style={looseness=7}] (1)
         edge  [in=45,out=135,loop] node {\tiny} (); 
\path[dotted,every loop/.style={looseness=7}] (2)
         edge  [in=225-36,out=135-36,loop,left] node {\tiny} (); 
\path[dotted,every loop/.style={looseness=7}]
(3) edge  [in=225-36,out=315-36,loop] node {\tiny} ()
(4) edge  [in=225+36,out=315+36,loop] node {\tiny} (); 
\path[dotted,every loop/.style={looseness=7}]
(5) edge  [in=45+36,out=315+36,loop,right] node {\tiny} ();
\end{tikzpicture}
\end{center}
%%%%%%%%%%%%%%%%%%
In the graph, solid and dotted lines stand for existing and non-existing edges, respectively; $A \in \mathbb B^{n \times n}$ is a binary matrix with entries $1$ corresponding to existing edges, and 0 otherwise; and weights matrix $W$ is zero everywhere except for pairs $\{ 1,2\}$ and $\{ 3,4\}$, on which the weight is $\omega$. Note that using Proposition \ref{lmbBnd_prop}, we have $\lambda > \omega$; however, if $\lambda \le \omega$, then  $x=\l( 1, 1, 1, 1 , 0\r)$ would be optimal for \eqref{UGQSS}, whereas it is infeasible for \eqref{GQSS}. 
\end{rmk}

In the hope of  decreasing  the penalty coefficient and improving the chance of observing the optimal solution, we use a separate $\lambda_{ij}$ for each nonzero entry of $A$, that is, solving  
\begin{equation}\label{LGQSS} \tag{GQSS$_\Lambda$}%\tag{UGQSS}
L(\Lambda):= \max_{x \in \mathbb B^n} \ x^t W x - x^t (\Lambda \bullet A) x,\\
\end{equation} 
where $\Lambda \in \mathbb R^{n\times n}$, instead of \eqref{UGQSS}. We assume that $\Lambda$ is symmetric, similar to $A$. The following proposition, which is analogous to Proposition \ref{lmbBnd_prop}, shows how we can guarantee the solving of \eqref{GQSS} via \eqref{LGQSS} by the proper choice of $\Lambda$.  

\begin{prop} \label{LmbBnd_prop}
Let  $\lambda_i  = { \frac{w^+_{ii}}{2} + \sum_{j \not \in \mathcal N(i)\neq i} w^+_{ij} } $, where $\mathcal N(i)$ is as defined before. Problem \eqref{LGQSS} solves \eqref{GQSS} for matrix $\Lambda= [ \lambda_{ij}]$, where $\lambda_{ij}= \lambda_{ji} > \frac{ \max \{ \lambda_i, \lambda_j\} } {A_{ij} }$ for those $i$ and $j$ where $A_{ij} \neq 0$, and $\lambda_{ij} = 0$ otherwise.
\end{prop}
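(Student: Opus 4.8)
The plan is to imitate, essentially verbatim, the contradiction argument of Proposition \ref{lmbBnd_prop}, the only substantive change being that the single scalar $\lambda$ is replaced by the per-entry multipliers $\Lambda_{pj}$ and that the division by $\min_{j:A_{ij}\neq 0} A_{ij}$ is now folded into each $\lambda_{ij}$. First I would suppose, for contradiction, that $x^\ast$ is optimal for \eqref{LGQSS} but infeasible for \eqref{GQSS}. Since $A \ge \mathbf 0$, the value $(x^\ast)^t A x^\ast$ is a sum of nonnegative terms, so its non-vanishing forces indices $p,q$ with $A_{pq}\neq 0$ and $x^\ast_p = x^\ast_q = 1$; in particular $q \in \mathcal N(p)$. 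As in the earlier proof, let $\tilde x^\ast$ be obtained from $x^\ast$ by resetting $x^\ast_p = 0$, and the goal is to show $L(\Lambda,\tilde x^\ast) > L(\Lambda, x^\ast)$, contradicting optimality of $x^\ast$.

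Next I would compute the exact change produced by flipping $x_p$. Collecting the terms of $x^t W x - x^t(\Lambda \bullet A)x$ that involve $x_p$, using $x_p^2 = x_p$ and the symmetry of $W$ and $\Lambda\bullet A$, and invoking Assumption 3 ($W\bullet A = \mathbf 0$) to kill every $W_{pj}$ with $j\in\mathcal N(p)$ together with the convention $\Lambda_{pj}=0$ whenever $A_{pj}=0$, I obtain
$$L(\Lambda,\tilde x^\ast) = L(\Lambda, x^\ast) + 2\left(-\frac{W_{pp}}{2} - \sum_{j \not\in \mathcal N(p) \neq p} W_{pj} x_j\right) + 2\left(\Lambda_{pq} A_{pq} + \sum_{j \in \mathcal N(p) \neq q} \Lambda_{pj} A_{pj} x_j\right),$$
which is the exact analogue of the identity in Proposition \ref{lmbBnd_prop} with $\lambda A_{pj}$ replaced by $\Lambda_{pj} A_{pj}$.

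I would then lower-bound the right-hand side exactly as before: replace $W_{pp}$ and $W_{pj}$ by their nonnegative parts $W^+$ (legitimate since $-W_{ij}\ge -W^+_{ij}$), use $x_j\le 1$ to pass from $\sum W^+_{pj}x_j$ to $\sum W^+_{pj}$, and discard the nonnegative trailing sum $\sum_{j\in\mathcal N(p)\neq q}\Lambda_{pj}A_{pj}x_j$. This gives $L(\Lambda,\tilde x^\ast) - L(\Lambda, x^\ast) \ge 2\bigl(\Lambda_{pq}A_{pq} - \lambda_p\bigr)$, where $\lambda_p = \frac{w^+_{pp}}{2} + \sum_{j\not\in\mathcal N(p)\neq p} w^+_{pj}$ is precisely the quantity in the statement. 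The hypothesis yields $\Lambda_{pq} > \frac{\max\{\lambda_p,\lambda_q\}}{A_{pq}} \ge \frac{\lambda_p}{A_{pq}}$, i.e.\ $\Lambda_{pq}A_{pq} > \lambda_p$, so the difference is strictly positive, which is the contradiction. Hence every optimizer of \eqref{LGQSS} is feasible for \eqref{GQSS}. To conclude that \eqref{LGQSS} \emph{solves} \eqref{GQSS}, I would note that on any feasible point $x^t A x = 0$ forces each nonnegative term $A_{ij}x_ix_j$ to vanish, so $x^t(\Lambda\bullet A)x = 0$ and $L(\Lambda,x) = x^t W x$ there; thus the (feasible) maximizer of $L(\Lambda,\cdot)$ maximizes $x^t W x$ over the feasible set, and the optimal values coincide.

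The main obstacle is purely organizational: correctly isolating the terms touched by flipping $x_p$ and making sure it is Assumption 3 that eliminates the cross terms between $W$ and $A$, so that the surviving $W$-contribution is exactly $-2\lambda_p$ and the surviving penalty contribution is at least $2\Lambda_{pq}A_{pq}$. The one conceptual point to respect is that each edge now carries its own multiplier, so the threshold divides by the specific $A_{pq}$ rather than by $\min_j A_{ij}$; the use of $\max\{\lambda_p,\lambda_q\}$ in place of $\lambda_p$ is a symmetric, mildly conservative choice that keeps $\Lambda$ symmetric and makes the bound valid no matter which of the two endpoints one elects to flip.
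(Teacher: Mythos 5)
Your proposal is correct and is exactly the argument the paper has in mind: the paper omits this proof, stating only that it is ``very similar to the proof of Proposition \ref{lmbBnd_prop},'' and your write-up is precisely that adaptation --- the same flip-$x_p$ contradiction, with $\lambda A_{pj}$ replaced by $\Lambda_{pj}A_{pj}$ so that the relevant threshold becomes $\Lambda_{pq}A_{pq} > \lambda_p$, which the hypothesis $\lambda_{pq} > \max\{\lambda_p,\lambda_q\}/A_{pq}$ guarantees. Your closing observation that the penalty vanishes on feasible points (so the feasible maximizer of \eqref{LGQSS} indeed solves \eqref{GQSS}) is a step the paper leaves implicit even in Proposition \ref{lmbBnd_prop}, and including it is a small improvement rather than a deviation.
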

The proof of the above proposition is very similar to the proof of Proposition \ref{lmbBnd_prop}, and is omitted here to avoid repetition. Note that the largest entry of $\Lambda$ is equal to $\tilde \lambda$ from Proposition \ref{lmbBnd_prop}. 

Although the bounds of Propositions \ref{lmbBnd_prop} and \ref{LmbBnd_prop} are tight, when using the D-Wave 2X to solve \eqref{UGQSS} or \eqref{LGQSS}, we observed that these bounds fail to achieve the solution in many of our test cases. In the section that follows, we see that the subgradient method for solving the Lagrangian dual of \eqref{GQSS} can be viewed as a way of finding smaller penalty coefficients that are instance dependent and improve the performance of the D-Wave 2X in solving \eqref{GQSS}. 

%%%
\subsection{Iterative methods for the GQSS problem}\label{sec:GQSS-subgradient-method}

Note that by $A\ge \mathbf 0$,  we have $x^tAx \ge 0$ for all $x\in \mathbb B^n$. Therefore, we can substitute our equality constraint $x^t A x = 0$ with an inequality constraint as in the alternative formulation
\begin{equation}
\begin{array}{llc} \label{GQSSineq_def} \tag{$\overline{ \text{GQSS}}$}
\max & x^t W x\,,&\\
\text{s.t.}& x^t A x \le 0\,,\\
& x \in \mathbb B^n\,,&
\end{array}
\end{equation}
with Lagrangian dual
\begin{equation}\label{DGQSS}\tag{$\text D_{\overline{ \text{GQSS}}}$ }
\min_{\lambda \ge 0} \ \max_{x \in \mathbb B^n} \ x^t W x - \lambda (x^t A x)\,.
\end{equation}

\begin{prop}[Strong duality] An optimal dual solution of \eqref{DGQSS} corresponds to an optimal primal solution of \eqref{GQSS}. 
\end{prop}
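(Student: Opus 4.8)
The plan is to establish both the absence of a duality gap and the recovery of a primal optimizer from the dual, leveraging the sign structure $A \ge \mathbf 0$ together with Proposition \ref{lmbBnd_prop}. First I would record two structural facts about the dual function $d(\lambda) := \max_{x \in \mathbb B^n} x^t W x - \lambda (x^t A x)$. Because $A \ge \mathbf 0$ forces $x^t A x \ge 0$ for every $x \in \mathbb B^n$, each affine piece $x^t W x - \lambda (x^t A x)$ of $d$ has nonpositive slope in $\lambda$, so $d$ is convex, piecewise linear, and \emph{non-increasing}. Moreover, Proposition \ref{lmbBnd_prop} asserts that for $\lambda > \tilde\lambda$ every maximizer of the Lagrangian is feasible for \eqref{GQSS} and in fact solves it; hence $d(\lambda) = v^*$ for all $\lambda > \tilde\lambda$, where $v^*$ denotes the optimal value of \eqref{GQSS}, and by continuity of the piecewise-linear $d$ we also get $d(\tilde\lambda) = v^*$. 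Since $d$ is non-increasing and equal to the constant $v^*$ for $\lambda \ge \tilde\lambda$, its minimum over $\lambda \ge 0$ is attained and equals $v^*$; combined with the Weak Duality lemma this already yields strong duality at the level of optimal values and confirms that optimal dual solutions exist.

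Next I would prove the solution correspondence by a complementary-slackness argument at an arbitrary optimal dual solution $\lambda^*$. The subdifferential of $d$ at $\lambda^*$ is the convex hull of $\{-x^t A x : x \in X^*(\lambda^*)\}$, where $X^*(\lambda^*)$ is the set of Lagrangian maximizers in \eqref{def: LR}. If $\lambda^* > 0$, first-order optimality over $\lambda \ge 0$ gives $0 \in \partial d(\lambda^*)$, so $0$ is a convex combination of the quantities $-x^t A x$; since each $x^t A x \ge 0$, every maximizer carrying positive weight must satisfy $x^t A x = 0$. If instead $\lambda^* = 0$, optimality at the left boundary forces the right derivative $d'_+(0) \ge 0$; but $d'_+(0) = \max\{-x^t A x : x \in X^*(0)\} \le 0$ by nonnegativity, so equality holds and again some maximizer has $x^t A x = 0$. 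In either case I extract a maximizer $x^*$ of \eqref{def: LR} at $\lambda^*$ that is \emph{feasible} for \eqref{GQSS}.

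Finally I would close the loop. For such an $x^*$ one has $d(\lambda^*) = (x^*)^t W x^* - \lambda^* \cdot 0 = (x^*)^t W x^*$, so $x^*$ is a feasible point of \eqref{GQSS} attaining the dual value; by the Weak Duality lemma the dual value is an upper bound for $v^*$, whence $(x^*)^t W x^* = v^*$ and $x^*$ is primal optimal. I expect the main obstacle to be this recovery step rather than the value equality: one cannot simply claim that \emph{every} Lagrangian maximizer at $\lambda^*$ is feasible, since the tight instance in the Remark following Proposition \ref{lmbBnd_prop} exhibits an infeasible maximizer sitting at a kink of $d$. The argument must therefore produce a \emph{particular} feasible maximizer, which is exactly what the subgradient optimality condition delivers through the nonnegativity of $x^t A x$; the boundary case $\lambda^* = 0$, handled via the one-sided derivative, is the only other point requiring care.
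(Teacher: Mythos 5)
Your proof is correct, and it rests on the same geometric observation as the paper's proof --- since $A \ge \mathbf 0$, every cut $y = x^t W x - \lambda\, (x^t A x)$ has non-positive slope in $\lambda$ --- but the mechanics differ in two genuine ways. The paper works directly with the outer (LP) formulation of \eqref{DGQSS}: the zero-slope cut coming from the feasible point $x = \mathbf 0$ bounds the problem, and at the optimum an active cut must have zero slope (``the constraint is turned on''), which is the entire argument. You decompose instead into (i) value equality and dual attainment via Proposition \ref{lmbBnd_prop}, observing that $d(\lambda) \equiv v^*$ for $\lambda > \tilde\lambda$ so the non-increasing piecewise-linear $d$ attains its minimum $v^*$, and (ii) primal recovery at an \emph{arbitrary} dual optimum $\lambda^*$ via the first-order condition $0 \in \partial d(\lambda^*)$ (or the right derivative at the boundary $\lambda^* = 0$), extracting a feasible Lagrangian maximizer and closing the gap with weak duality. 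The paper's version is shorter, and its boundedness step can be made airtight without Proposition \ref{lmbBnd_prop} (a maximum of finitely many affine pieces that includes a zero-slope piece is eventually constant, so the infimum is attained); your version buys rigor where the paper is terse: it proves attainment explicitly, handles the $\lambda^* = 0$ case, and --- importantly --- identifies the correct subtlety that not every maximizer of \eqref{def: LR} at $\lambda^*$ need be feasible (as the tight instance in the Remark shows), so one must produce a \emph{particular} feasible maximizer, which is exactly what the subdifferential condition delivers. Your recovery step also makes precise the paper's later assertion that the subgradient method ``terminates in strong duality with a zero subgradient,'' and as a bonus shows every $\lambda > \tilde\lambda$ is dual optimal, which is the link to the instance-dependent penalty interpretation.
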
 

\begin{proof} 
In fact, a linear programming formulation of \ref{DGQSS} is 
\begin{equation}
\begin{array}{llcl}
	\min &  \delta\,,& \\
	\text{s.t.} & \delta \geq x^t W x - \lambda (x^t A x) &\forall x\in \mathbb B^n\,,\\
	& \lambda \geq 0, \delta \in \mathbb R\,.
\end{array}
\end{equation}
For every $x \in \mathbb B^n$, every cut $y= x^t W x - \lambda (x^t A x)$ has a non-positive slope. Since the constraint $x^t A x \leq 0$ is feasible (for example, at $x = \mathbf 0$), this dual problem is bounded and, at its optimal solution, the mentioned constraint is turned on. This proves strong duality. 
\end{proof}

Recall that the performance of Algorithm \ref{main} depends heavily on the step-size schedule. In this section, we look at a few iterative schemes to find a primal-dual solution $(x^\ast, \lambda^\ast)$ that solves \eqref{DGQSS}. Note that, since $x^t A x$ is always positive, the direction of the negative of the subgradient vector is always in the positive direction of the $\lambda$-axis.  Therefore, to solve \eqref{DGQSS}, we need only an increasing sequence $\{\lambda_k\}$. The subgradient method then always terminates in strong duality with a zero subgradient.

%%%%%%%%%
\subsubsection{Newtonian method}
This method is based on the update rule
\begin{equation}\label{lmbNew_def}
\lambda^{k}= \frac{ (x^{k-1})^t  W x^{k-1}} {(x^{k-1})^t A x^{k-1}},
\end{equation}
for all $k \geq 1$ and starting from $\lambda^0 = 0$.

\begin{algorithm}
\caption{}
\label{lmbNew_alg}
\begin{tabbing} 
\quad \quad \= \quad \quad  \=\quad \=\quad \=\kill 
\> initialize: $k= 0$, $\lambda^0=0$, and $x^{0}= \arg \max_{x\in \mathbb B^n} x^t W x$\\
\> {\bf while} $(x^k)^t A x^k \neq 0$ {\bf do} 
\\
\>\> $k=k+1$\\
\>\> find $\lambda^{k}$ using update rule \eqref{lmbNew_def}\\
\>\> let $x^{k}:= \arg \max_{x\in \mathbb B^n} x^t W x - \lambda^k (x^t A x)$
\end{tabbing}
\end{algorithm}

\begin{prop}
The sequence $\{\lambda^k \}$ generated by Algorithm \ref{lmbNew_alg} is an increasing sequence.
\end{prop}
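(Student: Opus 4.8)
The plan is to exploit the optimality of each iterate $x^k$ for the Lagrangian relaxation at its own multiplier $\lambda^k$, together with the defining property of the Newtonian update. Writing $L(\lambda, x) = x^t W x - \lambda\,(x^t A x)$ as before, the central observation is that the update rule \eqref{lmbNew_def} says precisely that $\lambda^k$ is the root of the affine map $\lambda \mapsto L(\lambda, x^{k-1})$, so that $L(\lambda^k, x^{k-1}) = 0$. This converts the comparison of successive multipliers into a statement about the sign of $L(\lambda^k, x^k)$, which the optimality of $x^k$ controls.

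First I would fix an index $k \ge 1$ for which the loop continues, so that both $\lambda^k$ and $\lambda^{k+1}$ are defined; by the while-condition this forces $(x^{k-1})^t A x^{k-1} \neq 0$ and $(x^k)^t A x^k \neq 0$. Because $A \ge \mathbf 0$ and $x^k \in \mathbb B^n$, we have $(x^k)^t A x^k \ge 0$ (as already noted above), hence in fact $(x^k)^t A x^k > 0$, and likewise for $x^{k-1}$. This guarantees that both Newtonian quotients are well defined and that the denominator used to form $\lambda^{k+1}$ carries a definite positive sign.

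Next I would record the optimality inequality. Since $x^k = \arg\max_{x\in \mathbb B^n} L(\lambda^k, x)$, we have $L(\lambda^k, x^k) \ge L(\lambda^k, x^{k-1})$; substituting \eqref{lmbNew_def} into the right-hand side gives $L(\lambda^k, x^{k-1}) = (x^{k-1})^t W x^{k-1} - \lambda^k (x^{k-1})^t A x^{k-1} = 0$. Therefore $(x^k)^t W x^k - \lambda^k (x^k)^t A x^k \ge 0$, and dividing by the positive quantity $(x^k)^t A x^k$ yields
\[
\lambda^{k+1} = \frac{(x^k)^t W x^k}{(x^k)^t A x^k} \ge \lambda^k.
\]
The same computation covers the base case $k=0$: there $x^0 = \arg\max_x x^t W x$ is optimal for $L(\lambda^0,\cdot) = x^tWx$, and $(x^0)^t W x^0 = \max_x x^t W x \ge 0$ (attained since $x=\mathbf 0$ is always available), so $\lambda^1 \ge 0 = \lambda^0$. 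Applying the step at every $k$ then shows the whole sequence is non-decreasing.

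The main obstacle is upgrading this to a \emph{strict} inequality, which is what ``increasing'' should mean for the later convergence discussion. Strictness is equivalent to $L(\lambda^k, x^k) > 0$, i.e.\ to the relaxation value $d(\lambda) := \max_{x\in\mathbb B^n} L(\lambda,x)$ satisfying $d(\lambda^k) > 0$. Since $d$ is convex and non-increasing (each cut has slope $-x^t A x \le 0$) and $d(\lambda^k) \ge L(\lambda^k, x^{k-1}) = 0$, the only way strictness can fail is the degenerate case $d(\lambda^k) = 0$ with $x^k$ still infeasible; then $x^k$ is itself a root-defining point giving $\lambda^{k+1} = \lambda^k$, the same relaxed problem recurs, and the while-loop does not advance. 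I would discharge this either by ruling out $d(\lambda^k)=0$ when the primal optimum is positive, or by invoking that the generated iterates are distinct while the loop runs. I expect isolating and cleanly handling this degenerate tie to be the only delicate point; the rest is the two-line optimality-plus-substitution computation above.
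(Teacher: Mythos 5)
Your argument coincides with the paper's proof: the same base case (since $x=\mathbf 0$ gives $(x^0)^t W x^0 \ge \mathbf 0^t W \mathbf 0 = 0$, hence $\lambda^1 \ge 0 = \lambda^0$) and the same central step, namely that optimality of $x^k$ yields $(x^k)^t W x^k - \lambda^k (x^k)^t A x^k \ge (x^{k-1})^t W x^{k-1} - \lambda^k (x^{k-1})^t A x^{k-1} = 0$ by the update rule \eqref{lmbNew_def}, after which dividing by the positive quantity $(x^k)^t A x^k$ gives $\lambda^{k+1} \ge \lambda^k$. The strictness concern you raise at the end is reasonable but goes beyond the paper, whose own proof likewise establishes only $\lambda^{k+1} - \lambda^k \ge 0$, i.e.\ ``increasing'' is used there in the non-strict sense, so your core argument matches the paper's proof exactly.
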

\begin{proof}
Our proof is by induction. Note that $x^tWx=0$ for $x=0$. Therefore, $(x^0)^tWx^0 \ge 0$; as a result, $\lambda^1 \ge 0 = \lambda^0$.  By optimality of $x^{k}$, we have 
$$(x^{k})^t  W x^{k} - \lambda^k (x^{k})^t A x^{k} \ge (x^{k-1})^t W x^{k-1} - \lambda^k (x^{k-1})^t A x^{k-1}=0;$$ 
therefore,
$$\frac{(x^{k})^t  W x^{k}}{(x^{k})^t A x^{k}}- \lambda^{k}= \lambda^{k+1} - \lambda^k \ge 0.$$
\end{proof}

Notice that a feasible solution $x_f$ to \eqref{GQSSineq_def} provides a lower bound $x_f^t Wx_f$ for the Lagrangian dual of the problem. We can, therefore, modify the Newtonian method by replacing Equation \eqref{lmbNew_def} with 
\begin{equation}\label{lmbMdfNew_def}
\lambda^{k}= \frac{ (x^{k-1})^t  W x^{k-1}-(x_f)^t Wx_f} {(x^{k-1})^t A x^{k-1}}. 
\end{equation}
This suggests a modification of Algorithm \ref{lmbNew_alg}, which we refer to as the {\em {modified Newtonian method}}.  At iteration $k$, after obtaining a solution corresponding to $\lambda^k$, $x_i$'s are greedily set to zero until we reach a feasible solution, and the best feasible solution attained thus far is updated accordingly. The best feasible solution is then used in \eqref{lmbMdfNew_def} for finding the next $\lambda$.

Results from the next section suggest that the Newtonian method is an improvement over the theoretical bounds of Propositions \ref{lmbBnd_prop} and \ref{LmbBnd_prop}. The method, however, occasionally takes large steps, especially towards the end of the algorithm. To prevent this behaviour, for the remainder of this section, we suggest incrementing $\lambda$ with more-controlled step sizes.
%%%%%%%%%%%%%
\subsubsection{Incremental method} In the incremental method, the updating rule for $\lambda$ is\begin{equation}\label{lmbIncr_def}
\lambda^{k+1} = \lambda^k + \delta s,
\end{equation}
where $\delta \leq 1$ is a given constant. $\delta =1$ gives a fixed step-size update, and $\delta <1$ is a geometric update in which the step size shrinks as the algorithm proceeds. 

In utilizing a noisy quantum annealer, it is recommended to use the following termination criterion: the number of feasible solutions we wish to collect before termination is given to the algorithm (it is \verb+FeasCnt+ in Algorithm \ref{lmbIncr_alg}); after the termination, we pick the best observed feasible solution. Note that the iterates in Algorithm \ref{lmbNew_alg} cannot proceed after a feasible solution is obtained, because the denominator in \eqref{lmbNew_def} or \eqref{lmbMdfNew_def} is zero. As a result, to employ this termination criterion for Algorithm \ref{lmbNew_alg}, we need to update the iterates differently after reaching a feasible solution. One way to do this is to switch to an incremental scheme after observing a feasible solution. However, as argued earlier and supported by our experiments, the sequence of $\lambda$ could get exceedingly large in Algorithm \ref{lmbNew_alg}, and incrementing them afterwards does not result in a significant advantage.  

\begin{algorithm}
\caption{}
\label{lmbIncr_alg}
\begin{tabbing} 
\quad \quad \= \quad \quad  \=\quad \=\quad \=\kill 
\> given: \verb+ FeasCnt+ , $\delta\le1$, $\lambda^p$, and $s_\lambda$\\
\> initialize: $\lambda^0= \lambda^p$ and \verb+cnt+ $= 0$\\
\> {\bf for} $k= 1, 2, \ldots$ {\bf do} \\
\>\> $ \lambda^k = \lambda^{k-1} + s_\lambda$ \\
\>\> $ s_\lambda= \delta s_\lambda$\\
\>\> let $x^{k}:= \arg \max_{x\in \mathbb B^n} x^t W x - \lambda^k (x^t A x)$\\
\>\> {\bf if} $(x^{k})^t A x^{k} = 0$\\
\>\> \> \verb+ cnt +$=$\verb+ cnt+ $+1$\\
\>\>{\bf if} \verb+ cnt+ $\ge$ \verb+FeasCnt+ \\
\>\>\>\> terminate
\end{tabbing}
\end{algorithm}

The incremental method avoids the problem of taking long jumps, unlike the Newtonian method. The shortcoming is that it is highly dependent on $s_\lambda$ and may take many iterations if $\lambda_p$ and/or $s_\lambda$ are too small. In what follows, we propose a hybrid technique that combines the advantages of both the Newtonian and incremental methods. 

%%%%%%%%%%%%%
\subsubsection{Hybrid method} \label{sec:hybrd} In the hybrid method, the step sizes are proportionate to the length of the subgradients. Suppose that at iteration $k$ we have $\lambda^k$ and 
\begin{equation}
 x^k := \arg\max_{x\in \mathbb B^n}  \  x^t W x - \lambda^k \l(x^t A x\r). 
 \end{equation}
Then, 
\begin{equation} 
L(\lambda) \geq (x^k)^t W x^k - \lambda (x^k)^t A x^k,
\end{equation}
and the gradient of the right-hand side, $\delta^k= (x^k)^t A x^k$, is a subgradient for $L$.  In our hybrid scheme, we set 
\begin{equation}
\lambda^{k+1}=\lambda^{k} + s_\lambda^k, \quad \text{for} \quad  
s_\lambda^k = \tilde \alpha \delta^k\,.
\end{equation}

Similar to the incremental method, in the hybrid method we wish to collect a certain number of feasible solutions before termination. Because $\delta^k$ vanishes when we see a feasible solution, we switch to the incremental method with $\lambda^p = \lambda^k$ and a given $s_\lambda$ after reaching the first feasible solution. The complete algorithm is given below. 
%%%%%%%%%
\begin{algorithm}
\caption{}
\label{lmbHybrid_alg}
\begin{tabbing} 
\quad \quad \= \quad \quad  \=\quad \=\quad \=\kill 
\> given: $s_\lambda$ and \verb+FeasCnt+\\
\> initialize: $k= 0$, $\lambda^0=0$, $x^{0}= \arg \max_{x\in \mathbb B^n} x^t W x$, and $\tilde \alpha$\\
\> {\bf while} $ (x^k)^t A x^k \neq 0$ {\bf do} \\
\>\> $k\leftarrow k+1$\\
\>\> $\delta^{k-1}= (x^{k-1})^t A x^{k-1} $\\
\>\> $ \lambda^{k} = \lambda^{k-1} + \tilde \alpha \delta^{k-1}$ \\
\>\> $x^{k}= \arg \max_{x\in \mathbb B^n} x^t W x - \lambda^k (x^t A x)$\\
\> {\bf go to} Algorithm \ref{lmbIncr_alg} with $\lambda^p:=\lambda^k$, $s_\lambda$, $\delta=1$, and \verb+FeasCnt+
\end{tabbing}
\end{algorithm}

\section{Experimental results} \label{sec: res}

In this section, we report our experimental results of solving \eqref{GQSS} using both the penalty methods of Section \ref{sec:GQSS-penalty-method} and the iterative methods of Section \ref{sec:GQSS-subgradient-method}. The test instances of \eqref{GQSS} were generated randomly, with matrices $W$ taking integer entries between $-5$ and $5$, and $A$ generated as sparse binary matrices with a sparsity of $0.4$. 

Let us first motivate our discussion by experimentally showing the inconvenience caused by large penalty coefficients. The impact of $\lambda$ on the chance of observing the optimal solution from the quantum annealer is depicted in Figure \ref{lmbBndFail_fig} for two random instances. In the plots of Figure \ref{lmbBndFail_fig}, the range of $\lambda$, that is, from 0 to the theoretical bound for $\lambda$ derived by Proposition \ref{lmbBnd_prop}, is divided into 100 steps, and the bound of Proposition \ref{lmbBnd_prop} is depicted with a dashed line. For each value of $\lambda$ for which the quantum annealer succeeded in reaching the optimal solution, there is a `$+$' mark showing the number of times the optimal solution was observed among 1000 reads. Note that in both cases, the values of $\lambda$ for which the optimal solution is observed are smaller than the theoretical bound, and, the smaller $\lambda$ is, the higher is the chance of  observing the optimal solution.
\begin{figure}[H]
\centering     %%% not \center
\subfigure[]{\label{lmbBndFail_fig:a}\includegraphics[scale=0.38]{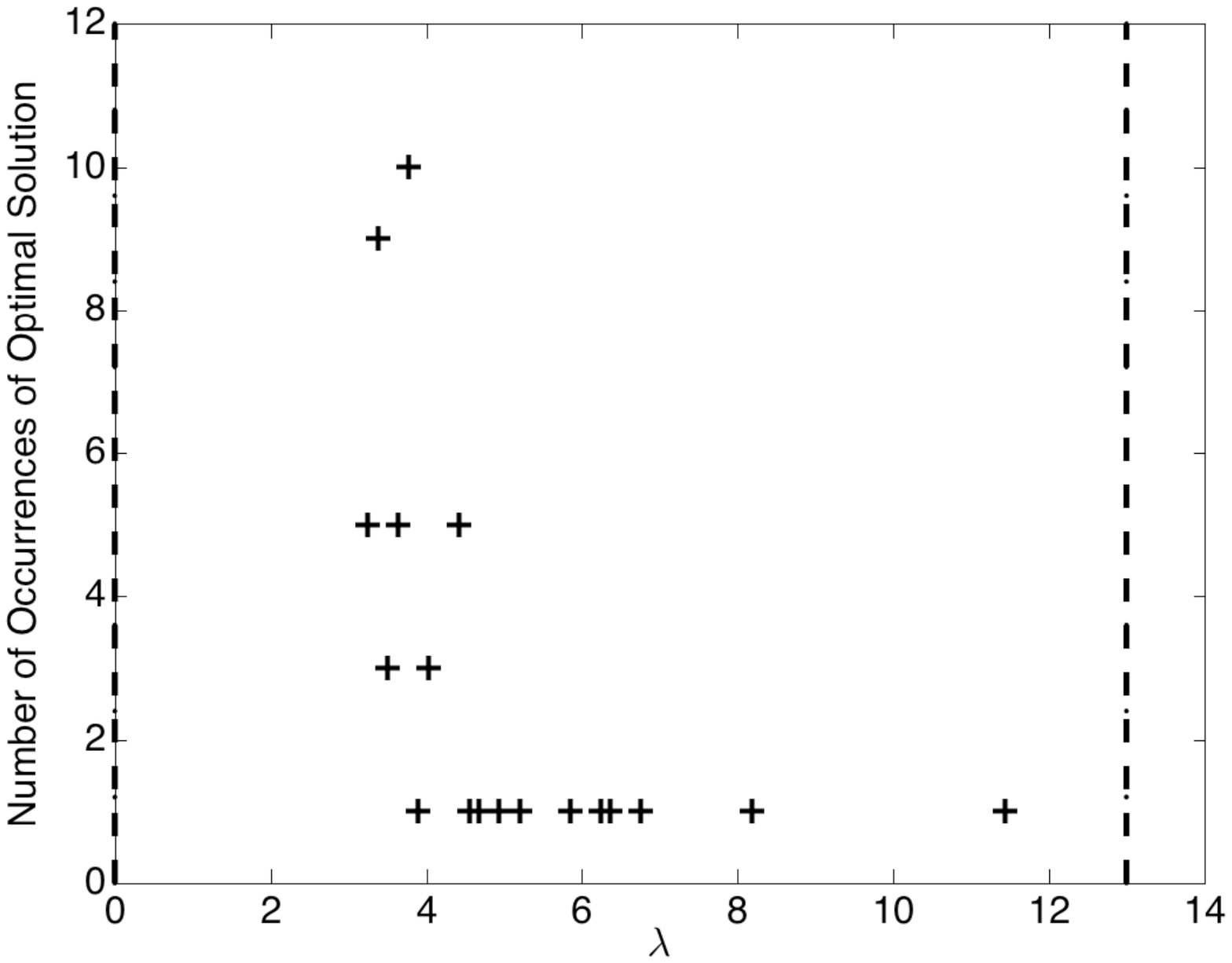}} \quad
\subfigure[]{\label{lmbBndFail_fig:b}\includegraphics[scale=0.38]{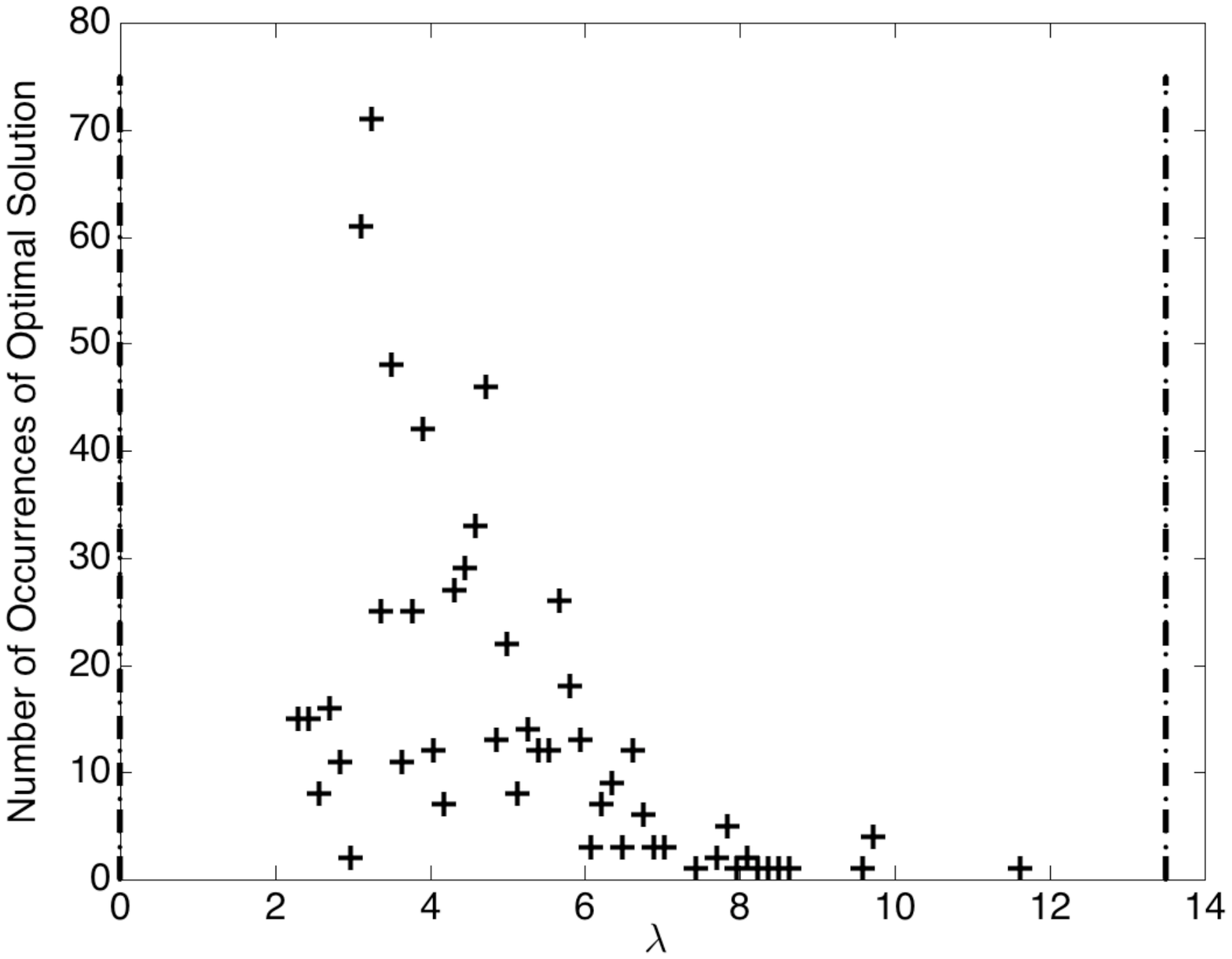}}
\vspace{-0.5cm}
\caption{Impact of $\lambda$ on observing the optimal solution.}
\label{lmbBndFail_fig}
\end{figure}

We generated 30 random test instances of the type described above. We ran two experiments on them, one using the D-Wave 2X, with queries handled by D-Wave's SAPI 2.2.1, and another using single-flip simulated quantum annealing (SQA), based on the Trotterization of the path integral of the quantum Hamiltonian of the Ising spin model with transverse field as discussed in \cite{HRIT, SantoroTosatti}.

In the experiment of solving the test instances using the D-Wave 2X, the instances were embedded on the Chimera graph of 1100 working qubits using the {\sf findEmbedding} functionality of SAPI 2.2.1. Then, the {\sf sapiSolveIsing} function was used with solutions post-processed in the {\sf optimization} mode. 
The ferromagnetic chain strengths were set to $-1$ after scaling the objective function such that all coefficients were between $-1$ and $1$. The chain strengths were iteratively incremented by the geometric series with a ratio of $0.8$. This iteration was terminated after {\sf sapiUnembedAnswer} collected $10\%$ of a set number of {\sf numreads} in {\sf discard} mode. 

We have tested the theoretical bounds of Propositions \ref{lmbBnd_prop} and \ref{LmbBnd_prop}, as well as the four iterative schemes of Section \ref{sec:GQSS-subgradient-method}---the Newtonian method, the modified Newtonian method, the incremental method, and the hybrid method---on our randomly generated test cases. The Newtonian and modified Newtonian methods were terminated as soon as the first feasible solution was obtained. The incremental method was initialized with $\lambda_0 =0$, $\delta =1$, and $s_\lambda =1$ (i.e., we used fixed step-size increments), and we collected five feasible solutions before termination (i.e., \verb+FeasCnt+ $ =5$). 
For the hybrid method, we found $\tilde \alpha$ such that $\lambda^1$ coincides with $\lambda^1$ from the Newtonian method, that is, 
$$\tilde \alpha \delta^0 =\frac{(x^0)^t W x^0}{(x^0)^t A x^0},$$ 
which concludes
 $$\tilde \alpha (x^0)^t A x^0  = \frac{(x^0)^t W x^0}{(x^0)^t A x^0} \Rightarrow  \tilde \alpha = \frac{(x^0)^t W x^0  }{\l((x^0)^t A x^0\r)^2},$$
where $x^0 = \arg\max_{x \in \mathbb B^n} x^t Wx $. To avoid taking extremely small step sizes, we bounded our step-size scheme by 0.05, so in our experiment we took $\tilde \alpha = \max \l(  \frac{(x^0)^t W x^0  }{\l((x^0)^t A x^0\r)^2} , 0.05 \r)$. Similar to the incremental method, we collected five feasible points before termination, so \verb+FeasCnt+ $=5$. In our experiment, we set $s_\lambda=0.5$. 

Table \ref{res_G} summarizes the results of solving our test cases using the D-Wave 2X. In this table, {$\mathsf n$} is the dimension of the problem, and {\sf opt} is the optimal objective value of the problem found using  Gurobi Optimizer 6.0.4 \cite{gurobi}. The results for theoretical bounds attained in Propositions \ref{lmbBnd_prop} and \ref{LmbBnd_prop} are presented in columns {\sf prop1} and {\sf prop2}, respectively. The columns {\sf new}, {\sf m-new}, {\sf incr}, and {\sf hyb} refer to the Newtonian, modified Newtonian, incremental, and hybrid methods, respectively. For each method, the best objective value {\sf obj} and the corresponding Lagrangian multiplier $\lambda$ are reported. In {\sf prop2}, $\lambda$ refers to the range of the entries in the matrix of Lagrangian multipliers. For each instance, an asterisk indicates whether the optimal solution was achieved. The column {\sf cnt} in each of the iterative methods, {\sf new}, {\sf m-new}, {\sf incr}, and {\sf hyb}, is the total number of reads over all iterations of each method, obtained from the D-Wave 2X and reported in thousands. The number of reads (again reported in thousands) for {\sf prop1} and {\sf prop2} were then selected to exceed the largest number of reads performed by all of the iterative methods in favour of the penalty methods. It is worth recalling that each iteration of any of our methods, including {\sf prop1} and {\sf prop2}, may have several calls to the D-Wave 2X while chain strengths increase until we collect at least $10\%$ of a set number of {\sf numreads} in {\sf discard} mode. Therefore, each iteration may have a different total {\sf numreads} value, because of which the columns {\sf cnt} in {\sf prop1} and {\sf prop2} are not equal. 

To dissociate the effect of embedding on our results, we also conducted an experiment with SQA on our test cases; the results of this experiment are summarized in Table \ref{res_N}. In the SQA queries, the inverse temperature $\beta$ was set to 15. The number of Trotter slices was set to 15. The strength of the transverse field was linearly scheduled to decrease from 3 to 0.1. Each read from SQA consisted of 100 sweeps through the effective Hamiltonian in one dimension higher. In Table \ref{res_N}, {\sf cnt} is the total number of reads for a complete run of each method; the number of reads in each iteration of the iterative methods was 20. The {\sf INF} for the objective function stands for \emph{infeasible} and refers to instances where SQA did not return a feasible solution.

\begin{table} \caption{Experimental results using the D-Wave 2X.}
\footnotesize
\begin{threeparttable}
\begin{tabular}{| ll | lll | lll | lll | lll | lll | lll |} 
\hline
& &\multicolumn{3}{c|}{\sf prop1} & \multicolumn{3}{c|}{\sf prop2} & \multicolumn{3}{c|}{\sf new}& \multicolumn{3}{c|}{\sf m-new}& \multicolumn{3}{c|}{\sf incr}& \multicolumn{3}{c|}{\sf hyb} \\
\cline{3-20}
{\sf n} & {\sf opt} & $\lambda$ & {\sf obj}& {\sf cnt} &$\lambda$ & {\sf obj} & {\sf cnt} & $\lambda$ & {\sf obj}& {\sf cnt} & $\lambda$ & {\sf obj}& {\sf cnt} & $\lambda$ & {\sf obj}& {\sf cnt} & $\lambda$ & {\sf obj}& {\sf cnt} \\
\hline 
30 & 61 & 	29 & 50 &40 & 9-29 & 33 & 40 & 6.5 & 61$^\ast$ & 24 & 5.5 & 55 & 27 & 5 & 61$^\ast$ & 34 & 3.3 & 61$^\ast$ & 28\\
30 & 75 & 32 & 66 & 80 & 8-32 & 66 & 70& 9.4 & 75$^\ast$ & 22 & 5.5 & 75$^\ast$ & 26 & 6 & 75$^\ast$ & 38 & 5.6 & 75$^\ast$ & 60\\
30 & 54 & 	32 & 	48 &170 & 9-32 & 46&180 & 14.3 & 40 & 38 & 7.5 & 47 & 51 & 8 & 48 & 74 & 7.4 & 47 & 166 \\
30 & 64 & 	35 & 	41 & 50 & 	13-35 & 52 & 60 & 36 & 32 & 37 & 5.2 & 56 & 42 & 5 & 64$^\ast$ & 46 & 3.4 & 	64$^\ast$ & 32\\
30 & 56 & 	34 & 	46 & 60 & 	11-34 & 44 & 60 & 32.5 & 38 & 25 & 5.5 & 56$^\ast$ & 33 & 6 & 56$^\ast$ & 37 & 4 & 56$^\ast$ & 41\\
30 & 49 & 	35 & 	32 & 60 & 	6-35 & 44 & 70 & 16 & 32 & 37 & 4 & 49$^\ast$ & 36 & 4 & 49$^\ast$ & 44 & 3.7 & 49$^\ast$ & 43\\
30 & 48 & 	32 & 43 & 80 & 6-32 & 44 & 80 & 16.7 & 34 & 26 & 9 & 44 & 34 & 6 & 46 & 49 &	4.8 & 48$^\ast$ & 70\\
30 & 68 & 	33 & 	54 &120 & 6-33 & 45 & 110 & 5 & 63 & 38 & 6.1 & 61 & 49 & 	6 & 68$^\ast$ & 58 & 5.5 & 68$^\ast$ & 84\\
30 & 92 & 	33 & 	76 & 60 & 	5-33 & 68 & 80 & 5.1 & 92$^\ast$ & 24 & 5.3 & 92$^\ast$ & 32 & 5 & 92$^\ast$ & 38 & 4.6 & 92$^\ast$ & 29\\
30 & 65 & 	35 & 60 & 80 & 	7-35 & 59 & 80 & 38 & 47 & 29 & 7.8 & 65$^\ast$ & 30 & 7 & 65$^\ast$ &45 & 5.1 & 65$^\ast$ & 62\\
30 & 99 & 	39 & 	89 &120 & 11-39 & 89 & 120 & 	56.5 & 41 & 27 & 7.5 & 99$^\ast$ & 36 & 9 & 99$^\ast$ & 46 & 	7 & 99$^\ast$ & 95\\
30 & 57 & 	36 & 41 & 120 & 9-36 & 41 & 120 & 16.7 & 40 & 32 & 9.2 & 40 & 36 & 6 & 49 & 48 & 8.2 & 	57$^\ast$ & 88\\
30 & 56 & 	31 & 44 & 60 & 	5-31 & 54 & 60 & 8.9 & 48 & 17 & 6.3 & 56$^\ast$ & 24 & 	9 & 52 & 34 & 5.7 & 56$^\ast$ & 45\\
30 & 64 & 	31 & 54 & 180 & 10-31 & 54 & 150 & 40 & 54 & 38 & 7.5 & 61 & 54 & 8 & 	63 & 74 & 	5.1 & 64$^\ast$ & 125\\
30 & 61 & 	33 & 55	& 120 & 11-33 & 55 & 160 & 41 & 43 & 25 & 11.5 & 55 & 25 & 8 & 61$^\ast$ & 42 & 6 & 61$^\ast$ & 115 \\
30 & 52 & 	32 & 48 &150 & 7-32 & 49 & 130 & 32 & 42 & 41 & 8 & 52$^\ast$ & 48 & 6 & 52$^\ast$ & 59 & 5.1 & 52$^\ast$ & 122\\
30 & 69 & 	40 & 	42 &180 & 9-40 & 60 &160 & 6.9 & 62 & 38 & 10 & 46 & 43 & 7 & 69$^\ast$&62 & 4.9 & 69$^\ast$ & 138\\
30 & 74 & 35 & 45 & 160 & 14-35 & 49 &100 & 21.7 & 39 & 53 & 5.2 & 74$^\ast$ & 75 & 3 & 74$^\ast$ & 86 & 2.8 & 74$^\ast$ & 55\\
30 & 64 & 	33 & 	59&120 & 	7-33 & 61 & 100 & 39.5 & 30 & 25 & 10 & 64$^\ast$ & 25 & 7 & 64$^\ast$ & 39 & 5.7 & 64$^\ast$ & 97\\
30 & 80 & 	38 & 67 & 80 & 6-38 & 80$^\ast$ & 80 & 8.2 & 	80$^\ast$ & 26 & 10 & 80$^\ast$ & 33 & 6 & 80$^\ast$ & 44 & 5.5 & 80$^\ast$ & 52\\
30 & 63 & 	37 & 59 &120 & 11-37 & 62 &120 & 24 & 48 & 25 & 10.8 & 60 & 27 & 8 & 63$^\ast$ & 49 & 7.7 & 63$^\ast$ &106\\
30 & 	58 & 	32 & 55 & 90 & 	11-32 & 50 & 100 & 20.2 & 30 & 32 & 6 & 51 & 35 & 	9 & 55 & 48 & 6.3 & 57 & 89\\
30 & 	64 & 	32 & 	60 & 80 & 9-32 & 53 & 80 & 7.5 & 60 & 29 & 6 & 64$^\ast$ & 41 & 5 & 64$^\ast$ & 51& 3.9 & 64$^\ast$ & 69\\
30 & 	57 & 	33 & 57$^\ast$ & 120 & 8-33 & 57$^\ast$ & 120 & 9.5 & 57$^\ast$ & 27 & 10 & 57$^\ast$ & 29 & 7 & 57$^\ast$ & 50 & 6.5 & 57$^\ast$ &120\\
30 & 	50 & 	29 & 	38 & 90 & 	8-29 & 48 & 100 & 	31.5 & 33 & 26 & 5.5 & 48 & 38 & 7 & 49 & 39 & 5.2 & 49 & 77\\
30 & 	53 & 33 & 47 & 80 & 6-33 & 47 & 60 & 14.1 & 46 & 32 & 7.1 & 53$^\ast$ & 32 & 	5 & 53$^\ast$ & 40 & 4.9 & 53$^\ast$ & 55\\
30 & 	76 & 	34 & 69 & 140 & 9-34 & 76$^\ast$ & 120 & 25.7 & 53 & 31 & 7.7 & 76$^\ast$ & 32 & 7 & 76$^\ast$ & 56 &	7.1 & 76$^\ast$ & 115\\
30 & 	71 & 	35 & 57 &140 & 5-35 & 63 & 110 & 24.5 & 45 & 40 & 9 & 69 & 36 & 8 & 69 & 57 & 6.5 & 69 & 89\\
30 & 	52 & 	30 & 	40 & 80 & 9-30 & 50&60 & 8.8 & 47 & 23 & 7 & 49 & 26 & 8 & 52$^\ast$ & 32 & 5.3 & 52$^\ast$ & 46\\
30 & 	50 & 	28 & 43 & 60 & 7-28 & 41& 60 & 6.2 & 50$^\ast$ & 22 & 5.2 & 50$^\ast$ & 49 & 5 & 50$^\ast$ & 32 & 4.8 & 50$^\ast$ & 43\\
\hline
\end{tabular}
 \begin{tablenotes}
      \item[$\dagger$] \footnotesize{ The columns {\sf cnt} are reported in thousands.}
    \end{tablenotes}
\label{res_G}
\end{threeparttable}
\end{table}
%%%%%%%%
\begin{table} \caption{Experimental results using SQA.}
\footnotesize
\begin{threeparttable}
\begin{tabular}{| ll | lll | lll | lll | lll | lll | lll |} 
\hline
& &\multicolumn{3}{c|}{\sf prop1} & \multicolumn{3}{c|}{\sf prop2} & \multicolumn{3}{c|}{\sf new}& \multicolumn{3}{c|}{\sf m-new}& \multicolumn{3}{c|}{\sf incr}& \multicolumn{3}{c|}{\sf hyb} \\
\cline{3-20}
{\sf n} & {\sf opt} & $\lambda$ & {\sf obj}& {\sf cnt} &$\lambda$ & {\sf obj} & {\sf cnt} & $\lambda$ & {\sf obj}& {\sf cnt} & $\lambda$ & {\sf obj}& {\sf cnt} & $\lambda$ & {\sf obj}& {\sf cnt} & $\lambda$ & {\sf obj}& {\sf cnt} \\
\hline 
30 & 61 & 29 & 45 & 200 & 9-29 & 58 & 200 & 5 & 58 & 60 & 6 & 55 & 200 & 7 & 61$^\ast$ & 180 & 5.31 & 61$^\ast$ & 200 \\  
30 & 75 & 32 & 75 & 220 & 8-32 & 53 & 220 & 24.25 & 59 & 60 & 6 & 75$^\ast$ & 80 & 6 & 75$^\ast$ & 200 & 5.7 & 75$^\ast$ & 220 \\  
30 & 54 & 32 & 42 & 340 & 9-32 & 40 & 340 & 16.17 & 39 & 80 & 7.5 & 47 & 100 & 8 & 48 & 260 & 8.01 & 54$^\ast$ & 340\\  
30 & 64 & 35 & 56 & 180 & 13-35 & {\sf INF} & 180 & 11.25 & 64$^\ast$ & 80 & 6 & 64$^\ast$ & 80 & 5 & 64$^\ast$ & 180 & 5.64 & 64$^\ast$ & 140 \\  
30 & 56 & 34 & 56 & 200 & 11-34 & 43 & 200 & 4.96 & 56$^\ast$ & 60 & 9 & 49 & 200 & 5 & 56$^\ast$ & 200 & 4.43 & 56$^\ast$ & 140 \\  
30 & 49 & 35 & 47 & 160 & 6-35 & 33 & 160 & 16 & 49$^\ast$ & 80 & 4.75 & 49$^\ast$ & 120 & 4 & 49$^\ast$ & 160 & 4.79 & 49$^\ast$ & 140 \\  
30 & 48 & 32 & 45 & 200 & 6-32 & 41 & 200 & 30.5 & 36 & 80 & 5.75 & 48$^\ast$ & 80 & 6 & 48$^\ast$ & 200 & 5.75 & 48$^\ast$ & 180 \\  
30 & 68 & 33 & 51 & 360 & 6-33 & 68$^\ast$ & 360 & 7.83 & 63 & 60 & 7 & 68$^\ast$ & 280 & 8 & 68$^\ast$ & 240 & 7.35 & 68$^\ast$ & 360 \\  
30 & 92 & 33 & 59 & 180 & 5-33 & 52 & 180 & 14 & 74 & 160 & 9.3 & 92$^\ast$ & 60 & 5 & 92$^\ast$ & 180 & 6.33 & 92$^\ast$ & 140 \\  
30 & 65 & 35 & 60 & 300 & 7-35 & 62 & 300 & 16 & 56 & 160 & 7.83 & 60 & 60 & 6 & 65$^\ast$ & 200 & 6.66 & 65$^\ast$ & 300 \\  
30 & 99 & 39 & 90 & 300 & 11-39 & 90 & 300 & 56.5 & 62 & 80 & 7 & 99$^\ast$ & 100 & 7 & 99$^\ast$ & 220 & 9.37 & 99$^\ast$ & 300 \\  
30 & 57 & 36 & 49 & 280 & 9-36 & 40 & 280 & 20.75 & 46 & 80 & 7.75 & 57$^\ast$ & 60 & 8 & 57$^\ast$ & 260 & 7.04 & 57$^\ast$ & 280\\  
30 & 56 & 31 & 49 & 300 & 5-31 & 42 & 300 & 15.67 & 35 & 60 & 6.5 & 56$^\ast$ & 80 & 7 & 56$^\ast$ & 220 & 6.54 & 56$^\ast$ & 300 \\
30 & 64 & 31 & 61 & 260 & 10-31 & 63 & 260 & 22 & 57 & 80 & 9 & 64$^\ast$ & 100 & 9 & 64$^\ast$ & 240 & 6.82 & 64$^\ast$ & 260 \\
30 & 61 & 33 & 37 & 300 & 11-33 & 59 & 300 & 9.42 & 59 & 60 & 11.5 & 59 & 60 & 12 & 59 & 280 & 7.46 & 61$^\ast$ & 300 \\
30 & 52 & 32 & 49 & 220 & 7-32 & 48 & 220 & 32 & 34 & 80 & 6 & 52$^\ast$ & 60 & 6 & 52$^\ast$ & 200 & 9.05 & 52$^\ast$ & 220 \\
30 & 69 & 40 & 69$^\ast$ & 360 & 9-40 & 69$^\ast$ & 360 & 6.94 & 69$^\ast$ & 60 & 6.67 & 69$^\ast$ & 60 & 7 & 69$^\ast$ & 320 & 5.47 & 69$^\ast$ & 360 \\
30 & 74 & 35 & 53 & 160 & 14-35 & {\sf INF} & 160 & 7.69 & 74$^\ast$ & 60 & 3.82 & 74$^\ast$ & 40 & 4 & 74$^\ast$ & 160 & 3.87 & 74$^\ast$ & 160 \\
30 & 64 & 33 & 64$^\ast$ & 360 & 7-33 & 62 & 360 & 39.5 & 47 & 80 & 10 & 59 & 60 & 7 & 64$^\ast$ & 240 & 8.97 & 64$^\ast$ & 360 \\
30 & 80 & 38 & 67 & 180 & 6-38 & 62 & 180 & 8.2 & 80$^\ast$ & 60 & 10 & 71 & 80 & 5 & 80$^\ast$ & 180 & 6.15 & 80$^\ast$ & 160 \\
30 & 63 & 37 & 63$^\ast$ & 360 & 11-37 & 59 & 360 & 40 & 63$^\ast$ & 80 & 15.5 & 58 & 80 & 9 & 63$^\ast$ & 260 & 8.1 & 63$^\ast$ & 360 \\
30 & 58 & 32 & 55 & 260 & 11-32 & 51 & 260 & 20.75 & 42 & 80 & 7.5 & 58$^\ast$ & 100 & 8 & 57 & 260 & 8.71 & 58$^\ast$ & 240 \\
30 & 64 & 32 & 60 & 340 & 9-32 & 46 & 340 & 7.5 & 64$^\ast$ & 140 & 6 & 64$^\ast$ & 220 & 6 & 64$^\ast$ & 220 & 6.19 & 64$^\ast$ & 340 \\
30 & 57 & 33 & 43 & 440 & 8-33 & {\sf INF}  & 440 & 37 & 36 & 80 & 9.75 & 54 & 160 & 11 & 54 & 300 & 11.1 & 57$^\ast$ & 440 \\
30 & 50 & 29 & 48 & 240 & 8-29 & 44 & 240 & 5.59 & 49 & 60 & 7.5 & 50$^\ast$ & 300 & 12 & 39 & 240 & 8.21 & 50$^\ast$ & 240 \\
30 & 53 & 33 & 43 & 220 & 6-33 & {\sf INF}  & 220 & 11.88 & 53$^\ast$ & 80 & 6.63 & 53$^\ast$ & 60 & 6 & 53$^\ast$ & 200 & 6.14 & 53$^\ast$ & 220 \\
30 & 76 & 34 & 61 & 260 & 9-34 & 76$^\ast$ & 260 & 25.75 & 33 & 80 & 6.83 & 76$^\ast$ & 60 & 7 & 76$^\ast$ & 220 & 7.03 & 76$^\ast$ & 260 \\
30 & 71 & 35 & 63 & 340 & 5-35 & 65 & 340 & 26.25 & 60 & 120 & 9.38 & 71$^\ast$ & 60 & 11 & 71$^\ast$ & 260 & 8.76 & 71$^\ast$ & 340 \\
30 & 52 & 30 & 48 & 240 & 9-30 & 52$^\ast$ & 240 & 8.83 & 52$^\ast$ & 60 & 8 & 52$^\ast$ & 100 & 7 & 52$^\ast$ & 220 & 7.59 & 52$^\ast$ & 240 \\
30 & 50 & 28 & 41 & 220 & 7-28 & 41 & 220 & 11.25 & 38 & 100 & 6.83 & 50$^\ast$ & 160 & 6 & 50$^\ast$ & 220 & 5.91 & 50$^\ast$ & 220 \\
\hline
\end{tabular}
\label{res_N}
\end{threeparttable}
\end{table}

%%%%%%%%

\section{Discussion}\label{sec:conclusion}
We proposed an iterative scheme for solving the Lagrangian dual of a constrained binary programming problem using a quantum annealer.  We have tested several settings of our iterative method for a specific class of constrained binary programming problem, namely, the generalized quadratic stable set (GQSS) problem.
Our results show that the iterative methods outperformed the theoretical bounds for this problem. The hybrid method, in particular, performed the best, and could achieve the optimal solution in most of our test cases. Another important observation from the results is that the modified Newtonian method overcomes the overshooting problem of the step-size schedule in the Newtonian method. As the algorithm proceeds, $x^t A x$ approaches 0, and $x^tWx$ may get larger, so the ratio $\frac{x^t W x}{x^t A x}$ can jump to large values. Whereas this happens for the Newtonian method, the modified variant of this method avoids this problem because the best objective value observed for a feasible solution $x_f$ also improves, so $x^t W x - x_f W x_f$ stays minimal.

Since the Chimera graph is very sparse, problem instances in which the connectivity of $W+A$ is a subgraph of the Chimera graph are similarly sparse and often disconnected. Therefore, random test instances of the GQSS problem that are native to the Chimera graph are not sufficiently difficult for the purpose of our experiment.

It is worth mentioning that the performance of the outer approximation method of \cite{Ronagh15} for the GQSS problem is similar to the penalty methods. In the outer approximation method, a linear programming (LP) problem is initialized with a set of sufficiently large bounds (box constraints) on the Lagrangian multipliers; iteratively, the solution of the LP problem is employed to form \eqref{def: LR}, and, based on the solution of \eqref{def: LR}, a linear constraint (cut) is added to the LP problem; this procedure is terminated when no new cuts are generated. For the GQSS problem, to ensure that outer approximation returns the Lagrangian dual bound (instead of a looser lower bound), the box constraint must be as large as the theoretical penalty value. The Lagrangian relaxation with this multiplier is the first (and only) slave unconstrained problem solved, and, consequently, the success of this method is the same as that of the penalty methods. Therefore, \cite{Ronagh15} does not overcome the deficiencies of the penalty methods when working with noisy quantum annealers.  

Strong duality holds for the GQSS problem; thus, the subgradient method may be viewed as a technique for finding smaller penalty coefficients that are instance dependent. In general, strong duality does not hold for a constrained binary quadratic problem. Similar to the idea presented in \cite{Ronagh15}, we may employ the subgradient descent method within a branch-and-bound framework as a bounding procedure to solve a general constrained binary quadratic programming problem. However, missing the optimal solution can occasionally break the branch-and-bound framework by obtaining an incorrect bound based on a suboptimal solution. A suitable recovery scheme, or a guarantee for checking the optimality of a solution, is needed to successfully employ any of these methods in a branch-and-bound framework. This nontrivial task is a subject for future study.

\newpage


\begin{thebibliography}{10}

\bibitem{albash2014}
T.~Albash, W.~Vinci, A.~Mishra, P.~A. Warburton, and D.~A. Lidar.
\newblock Consistency tests of classical and quantum models for a quantum
  annealer.
\newblock {\em Phys. Rev. A}, 91:042314, Apr. 2015.
\newblock  \href  {http://dx.doi.org/10.1103/PhysRevA.91.042314}
  {\path{doi:10.1103/PhysRevA.91.042314}}.

\bibitem{chimera}
P.~I. Bunyk, E.~M. Hoskinson, M.~W. Johnson, E.~Tolkacheva, F.~Altomare, A.~J.
  Berkley, R.~Harris, J.~P. Hilton, T.~Lanting, A.~J. Przybysz, and
  J.~Whittaker.
\newblock Architectural considerations in the design of a superconducting
  quantum annealing processor.
\newblock {\em IEEE Transactions on Applied Superconductivity}, 24(4):1--10,
  Aug. 2014.
\newblock \href {http://dx.doi.org/10.1109/TASC.2014.2318294}
  {\path{doi: 10.1109/TASC.2014.2318294}}.

\bibitem{AidanRoy2014}
J.~{Cai}, W.~G. {Macready}, and A.~{Roy}.
\newblock {A practical heuristic for finding graph minors}.
Jun. 2014.
\newblock \href {http://arxiv.org/abs/1406.2741} {\path{arXiv:1406.2741}}.

\bibitem{farhi00}
E.~{Farhi}, J.~{Goldstone}, and S.~{Gutmann}.
\newblock {A Numerical Study of the Performance of a Quantum Adiabatic
  Evolution Algorithm for Satisfiability}.
Jul. 2000.
\newblock \href {http://arxiv.org/abs/quant-ph/0007071}
  {\path{arXiv:quant-ph/0007071}}.

\bibitem{farhi01}
E.~Farhi, J.~Goldstone, S.~Gutmann, J.~Lapan, A.~Lundgren, and D.~Preda.
\newblock A quantum adiabatic evolution algorithm applied to random instances
  of an {NP}-complete problem.
\newblock {\em Science}, 292(5516):472--476, 2001.
\newblock 
\href{http://dx.doi.org/10.1126/science.1057726}
  {\path{doi:10.1126/science.1057726}}.

\bibitem{QSS_FuriniTraversi}
F.~Furini and E.~Traversi.
\newblock Hybrid {SDP} bounding procedure.
\newblock {\em Experimental Algorithms}, pages 248--259, 2013.

\bibitem{gurobi}
{Gurobi Optimization, Inc.}
\newblock Gurobi Optimizer Reference Manual, 2015.
\newblock \url{http://www.gurobi.com/documentation/7.0/refman.pdf}.

\bibitem{HRIT}
B.~{Heim}, T.~F. {R{\o}nnow}, S.~V. {Isakov}, and M.~{Troyer}.
\newblock {Quantum versus classical annealing of Ising spin glasses}.
\newblock {\em Science}, 348:215--217, Apr. 2015.
\newblock \href {http://arxiv.org/abs/1411.5693} {\path{arXiv:1411.5693}}.

\bibitem{Ishikawa2011}
H.~Ishikawa.
\newblock Transformation of general binary {MRF} minimization to the first-order
  case.
\newblock {\em IEEE Transactions on Pattern Analysis and Machine Intelligence},
  33(6):1234--1249, Jun. 2011.
\newblock \href {http://dx.doi.org/10.1109/TPAMI.2010.91}
  {\path{doi:10.1109/TPAMI.2010.91}}.

\bibitem{DwaveNature}
M.~W. Johnson, M.~H.~S. Amin, S.~Gildert, T.~Lanting, F.~Hamze, N.~Dickson,
  R.~Harris, A.~J. Berkley, J.~Johansson, P.~Bunyk, E.~M. Chapple, C.~Enderud,
  J.~P. Hilton, K.~Karimi, E.~Ladizinsky, N.~Ladizinsky, T.~Oh, I.~Perminov,
  C.~Rich, M.~C. Thom, E.~Tolkacheva, C.~J.~S. Truncik, S.~Uchaikin, J.~Wang,
  B.~Wilson, and G.~Rose.
\newblock Quantum annealing with manufactured spins.
\newblock {\em Nature}, 473(7346):194--198, 05 2011.
\newblock \href{http://dx.doi.org/10.1038/nature10012}
{\path{doi:10.1038/nature10012}}.

\bibitem{katzgraber2015}
H.~G. Katzgraber, F.~Hamze, Z.~Zhu, A.~J. Ochoa, and H.~Munoz-Bauza.
\newblock Seeking quantum speedup through spin glasses: The good, the bad, and
  the ugly.
\newblock {\em Phys. Rev. X}, 5:031026, Sep. 2015.
\newblock  \href  {http://dx.doi.org/10.1103/PhysRevX.5.031026}
  {\path{doi:10.1103/PhysRevX.5.031026}}.

\bibitem{king}
A.~D. {King} and C.~C. {McGeoch}.
\newblock {Algorithm engineering for a quantum annealing platform}.
 Oct. 2014.
\newblock \href {http://arxiv.org/abs/1410.2628} {\path{arXiv:1410.2628}}.

\bibitem{BiqCrunch_s}
N.~Krislock, J.~Malick, and F.~Roupin.
\newblock {BiqCrunch}: A semidefinite branch-and-bound method for solving
  binary quadratic problems.
\newblock \url{http://lipn.univ-paris13.fr/BiqCrunch/}.

\bibitem{NonLinIntPrg_LiSun}
D. Li and X. Sun.
\newblock { Nonlinear Integer Programming}.
\newblock Springer, 2006.

\bibitem{SantoroTosatti}
R.~Marto\ifmmode~\check{n}\else \v{n}\fi{}\'ak, G.~E. Santoro, and E.~Tosatti.
\newblock Quantum annealing by the path-integral {M}onte {C}arlo method: The
  two-dimensional random {I}sing model.
\newblock {\em Phys. Rev. B}, 66:094203, Sep. 2002.
\newblock  \href  {http://dx.doi.org/10.1103/PhysRevB.66.094203}
  {\path{doi:10.1103/PhysRevB.66.094203}}.

\bibitem{McGeoch2013}
C.~C. McGeoch and C.~Wang.
\newblock Experimental evaluation of an adiabiatic quantum system for
  combinatorial optimization.
\newblock In {\em Proceedings of the ACM International Conference on Computing
  Frontiers}, CF '13, pages 23:1--23:11, New York, NY, USA, 2013. ACM.
\newblock 
  \href {http://dx.doi.org/10.1145/2482767.2482797}
  {\path{doi:10.1145/2482767.2482797}}.

\bibitem{Venturelli2015}
E.~G. {Rieffel}, D.~{Venturelli}, B.~{O'Gorman}, M.~B. {Do}, E.~M. {Prystay},
  and V.~N. {Smelyanskiy}.
\newblock {A case study in programming a quantum annealer for hard operational
  planning problems}.
\newblock {\em Quantum Information Processing}, 14:1--36, Jan. 2015.
\newblock   \href {http://dx.doi.org/10.1007/s11128-014-0892-x}
  {\path{doi:10.1007/s11128-014-0892-x}}.

\bibitem{Ronagh15}
P.~Ronagh, B.~Woods, and E.~Iranmanesh.
\newblock Solving constrained quadratic binary problems via quantum adiabatic
  evolution.
\newblock {\em Quantum Information {\&} Computation}, 16(11{\&}12):1029--1047,
  2016.
\newblock  \href {http://arxiv.org/abs/1509.05001} {\path{arXiv:1509.05001}}.

\bibitem{Gili}
G.~Rosenberg, P.~Haghnegahdar, P.~Goddard, P.~Carr, K.~Wu, and M.~L. de~Prado.
\newblock Solving the optimal trading trajectory problem using a quantum
  annealer.
\newblock {\em IEEE Journal of Selected Topics in Signal Processing},
  10(6):1053--1060, Sep. 2016.
\newblock \href {http://dx.doi.org/10.1109/JSTSP.2016.2574703}
  {\path{doi:10.1109/JSTSP.2016.2574703}}.

\bibitem{Vazirani}
W.~van Dam, M.~Mosca, and U.~Vazirani.
\newblock How powerful is adiabatic quantum computation?
\newblock In {\em 42nd {IEEE} {S}ymposium on {F}oundations of {C}omputer
  {S}cience ({L}as {V}egas, {NV}, 2001)}, pages 279--287. IEEE Computer Soc.,
  Los Alamitos, CA, 2001.
\newblock \href  {http://dx.doi.org/10.1109/SFCS.2001.959902}
 {\path{doi:10.1109/SFCS.2001.959902}}.

\bibitem{Dominic2015}
D.~{Venturelli}, D.~J.~J. {Marchand}, and G.~{Rojo}.
\newblock {Quantum annealing implementation of job-shop scheduling}.
Jun. 2015.
\newblock \href {http://arxiv.org/abs/1506.08479} {\path{arXiv:1506.08479}}.

\bibitem{zoshk2016}
Z.~Zhu, A.~J. Ochoa, S.~Schnabel, F.~Hamze, and H.~G. Katzgraber.
\newblock Best-case performance of quantum annealers on native spin-glass
  benchmarks: How chaos can affect success probabilities.
\newblock {\em Phys. Rev. A}, 93:012317, Jan. 2016.
\newblock \href  {http://dx.doi.org/10.1103/PhysRevA.93.012317}
  {\path{doi:10.1103/PhysRevA.93.012317}}.

\bibitem{Zick}
K.~M. Zick, O.~Shehab, and M.~French.
\newblock Experimental quantum annealing: Case study involving the graph
  isomorphism problem.
\newblock {\em Scientific Reports}, 5:11168, Jun. 2015.
\newblock \href {http://dx.doi.org/10.1038/srep11168}
  {\path{doi:10.1038/srep11168}}.

\end{thebibliography}
\end{document}